\newtheorem{thm}{Theorem}[section]
\newtheorem{lem}[thm]{Lemma}
\newtheorem{prop}[thm]{Proposition}
\newtheorem{cor}[thm]{Corollary}
\newtheorem{ex}[thm]{Example}
\newtheorem{fact}[thm]{Fact}
\theoremstyle{definition}
\newtheorem{remark}[thm]{Remark}
\newtheorem{question}[thm]{Question}
\newcommand{\reals}{\mathbb R}
\newcommand{\nat}{\mathbb N}
\newcommand{\cantor}{ 2^{\nat}}
\newcommand{\el}{{\ell _1}}
\newcommand{\baire}{ \nat ^ \nat}
\newcommand{\e}{\varepsilon}
\newcommand{\diam}{\text{diam}}
\newcommand{\NN}{\mathbb N}
\newcommand{\TT}{\mathbb T}
\def\rond{\mathaccent"7017 }
\newcommand{\cont}[2]{{\mathcal C} ( {#1} ,  {#2}) }
\newcommand{\hyp}[1]{{\mathcal K}( {#1})}
\title{Some universality results for dynamical systems}
\author[Darji] {Udayan B.~Darji}
\author[matheron]{\'Etienne Matheron}
\email[Darji]{ubdarj01@louisville.edu}
\email[Matheron]{etienne.matheron@univ-artois.fr}
\address[Darji]{Department of Mathematics, University of Louisville, Louisville, KY 40292, USA.}
\address[Matheron]{ Laboratoire de Math\'ematiques de Lens, Universit\'e d'Artois, Rue Jean Souvraz S.
P. 18, 62307 Lens (France).}
\keywords{universal, factor, $\ell_1$, Cantor space, Baire space}
\subjclass[2000]{Primary: 37B99,54H20. Secondary: 54C20,47A99.}
\thanks{The first author would like to acknowledge the hospitality and financial support of Universit\'e d'Artois}
\begin{document}
\begin{abstract}
We prove some ``universality" results for topological dynamical systems. In particular, we show that for any continuous self-map $T$ of a perfect  Polish space, one can find a dense, $T$-invariant set homeomorphic to the Baire space $\baire$; that there exists a bounded linear operator $U: \el \rightarrow \el$ such that any linear operator $T$ from
a separable Banach space into itself  with $\Vert T\Vert\leq 1$ is a linear factor of $U$; and that 
 given any $\sigma$-compact family ${\mathcal F}$ of continuous self-maps of
a compact metric space, there is a continuous self-map $U_{\mathcal F}$ of $\NN^\NN$ such that each $T\in {\mathcal F}$ is
a factor of $U_{\mathcal F}$.  
\end{abstract}
\maketitle
\section{Introduction}
It is well known that the Cantor space $\cantor$, the Baire space $\NN^\NN$ and the Banach space $\ell_1=\ell_1(\NN)$ have some interesting universality properties: every compact metric space is a continuous image of $\cantor$, every Polish space is a continuous image of $\baire$, and every separable Banach space is a quotient of $\ell_1$.

In this note, we will mainly be concerned with the following kind of generalization of these classical facts: instead of just lifting the points of a given space $X$ into one of the above spaces, one would like to lift continuous self-maps of $X$. 

\smallskip
Throughout the paper, by a  \emph{map} we always mean a continuous mapping between topological spaces. For us, a \emph{dynamical system} will be a pair $(X,T)$, where $X$ is a topological space and $T$ is a self-map of $X$. 

A dynamical system is $(X,T)$ is a \emph{factor} of a dynamical system $(E,S)$, and $(E,S)$ is an \emph{extension} of $(X,T)$, if  there is a %n {\bf open} 
 map $\pi$ from $E$ {\bf onto} $X$ such
that $T\pi=\pi S$, \mbox{\it i.e.} the following diagram commutes:
\[
\xymatrix{E   \ar[rr]^{S}  \ar@{->>}[d]_{\pi}                    & & E \ar@{->>}[d]_{\pi } \\
         X  \ar[rr]^{T}  & & X }
\]
%In this case, and although this is not a perfectly accurate terminology, we say that $S$ is a \emph{lifting} of $E$. 

When $T$ and $S$ are linear operators acting on Banach spaces $X$ and $E$, we say that $T$ is a \emph{linear} factor of $S$ if the above diagram can be realized with a linear factoring map $\pi$.

\smallskip
Among other things, we intend to prove the following two  ``common extension" results. 

\begin{itemize}
\item[$\bullet$] \emph{There exists a bounded linear operator $U: \el \rightarrow \el$ such that every linear operator $T$ from
a separable Banach space into itself  with $\Vert T\Vert\leq 1$ is a linear factor of $U$.} (This is proved in Section \ref{linear}.)
%\item[$\bullet$] Every self-map of a Polish space $X$ is a factor of a self-map of $\baire$. (\textbf{\color{red}{Is this true?? This would be nice...}}.)
\item[$\bullet$]  \emph{Given any $\sigma$-compact family ${\mathcal F}$ of self-maps of
a compact metric space, there is a map $U_{\mathcal F}:\baire\to\NN^\NN$ such that each $T\in {\mathcal F}$ is
a factor of $U_{\mathcal F}$.} (This is proved in Section \ref{Bairelift}.)
\end{itemize}

\smallskip
Regarding the Baire space $\baire$, we will also show that any Polish dynamical the dynamics of any continuous self-map $T$ of a perfect  Polish space $X$ is in some sense ``captured" by a self-map of $\baire$; namely, there is a dense, $T$-invariant set inside $X$ which is homeomorphic to $\baire$. This result is proved in Section~\ref{subsystem}.

\smallskip
The reader may wonder why, in the second result quoted above, the domain of the map $U_{\mathcal F}$ is  $\baire$ and not the Cantor space $\cantor$. The reason is that, even for simple compact families $\mathcal F$, one cannot obtain a common extension defined on a compact metrizable space; see Remark \ref{biensur}. We have not been able to characterize those $\sigma$-compact families ${\mathcal F}$ admitting a common extension defined on $\cantor$. However, we give a simple sufficient condition which implies in particular that this holds true for the family of all \emph{contractive} self-maps of a compact metric space. This is proved in Section \ref{nonsense}. We also show that for an arbitrary $\sigma$-compact family $\mathcal F$, a rather natural relaxation in the definition of an extension allows this; see Section \ref{generalized}.

\smallskip
The kind of question we are considering here may be formulated in a very general way. One has at hand a certain category $\mathfrak C$ and, given a collection of objects $\mathcal F\subseteq\mathfrak C$, one wants to know if there exists an object $\mathtt U_{\mathcal F}\in\mathfrak C$ which is \emph{projectively universal} for $\mathcal F$, in the following sense: for any $\mathtt T\in\mathcal F$, there is an epimorphism $\pi:\mathtt U_{\mathcal F}\to \mathtt T$ from $\mathtt U_{\mathcal F}$ to $\mathtt T$. Less ambitiously, one may just want to find a ``small" family of objects $\mathcal U$ which is projectively universal for $\mathcal F$, \mbox{\it i.e.} for any $\mathtt T\in\mathcal F$, there exists an epimorphsim from some $\mathtt U\in\mathcal U$ to $\mathtt T$.

In this paper, we have been concerned with categories $\mathfrak C$ whose objects are topological dynamical systems and whose morphisms are factoring maps. For example, if $\mathfrak C$ is the category of all compact metrizable dynamical systems then, as mentioned above, there is no projectively universal object for $\mathfrak C$, but there is one for the family of all contractive dynamical systems defined on a given compact metric space. Moreover, it is well known that every self-map of a compact metrizable space is a factor of some self-map of $\cantor$ (see Section \ref{extensionppty}); in other words, the family of all dynamical systems defined on $\cantor$ is projectively universal for the whole category. Likewise, if $\mathfrak C$ is the category all linear dynamical systems on separable Banach spaces then (as annouced above) there is an object $\mathtt U$ living on $\ell_1$ which is projectively universal for the family all $(X,T)\in\mathfrak C$ with $\Vert T\Vert\leq 1$.

\smallskip
Many other categories may be worth studying from this point of view.  We point out three of them.

One is the category of Polish dynamical systems, for which the family of all dynamical systems defined on $\NN^\NN$ happens to be projectively universal (see Section \ref{extensionppty}). 

The second category we have in mind is that of dynamical systems defined on \emph{arc-like continua}. %These are compact connected metric spaces which are inverse limits space of the interal $[0,1]$ with continuous bonding maps. 
An exotic space called the \emph{pseudo-arc} plays a central role in the theory of arc-like continua. As it turns out, the family of all dynamical systems defined on the pseudo-arc is projectively universal for this category (this is proved in \cite{lewis3}). The pseudo-arc has a rich and long history; see \mbox{\it e.g.}  \cite{lewis1} for a detailed survey and % \cite{lewis2} for survey of the pseudo-arc and 
\cite{is} for an interesting model theoretic construction.
%It is also known that every self-map of arc-like continua is a factor of some self-map of  the pseudo-arc \cite{lewis3}.

A third interesting example is the category of all \emph{minimal $G$-flows}, for a given topological group $G$. Recall that a $G$-flow $(X,G)$ is a compact topological space $X$ endowed with a continuous action of $G$, and that a $G$-flow $(X,G)$ is said to be minimal if every $G$-orbit $\{ g\cdot x;\; g\in G\}$ is dense in $X$. It is well known that there exists a projectively universal object $(M(G),G)$ for this category, which is unique up to isomorphism. There is a vast literature on universal minimal flows (see \cite{pestov2} and the references therein, for example  \cite{gw1}, \cite{gw2}, \cite{kpt}, \cite{julien}, \cite{pestov}, \cite{uspenskij}). In particular, the problem of determining exactly when $M(G)$ is \emph{metrizable} has been well investigated; see \cite{julien}. Very much in the spirit of the present paper, it seems quite natural to ask for which families $\mathcal F$ of minimal $G$-flows one can find a projectively universal $G$-flow defined on a metrizable compact space.%At this point we would like to point out some related results. Let $G$ be a topological group. A  {\sl  $G$ flow} is an ordered pair $(X, G)$ where $X$ is a compact space and a continuous action of $G$ on $X$. The flow is called {\sl minimal} if 
%$\{g(x): g \in G \}$ is dense in $X$ for all $x \in X$.  A {\sl minimal universal system of group $G$} is a compact space $M(G)$ and a continuous action of $G$ on $M(G)$ such that every minimal $G$-flow is a factor of $(M(G), G)$. A large literature exists on minimal universal systems. For example, see.
%We point few results which have a flavor similar to ours. First of all, every $G$ admits a minimal universal system $M(G)$ and this system is unique. The question of when $M(G)$ is metrizable has been well investigated. For example, Pestov \cite{pestov} showed 
%that if $G$ is the group of orientation preserving homeomorphisms of the circle $S^1$, then $M(G)$ is homeomorphic to $S^1$. Uspenskij \cite{uspenskij} showed that this type of results do not hold in higher dimensions.
%In particular, he showed that if $X$ is a manifold of dimension greater than 1 and $G =Hom(X)$, the group of homeomorphisms of X,
%then $M(G)$ is nonmetrizable. Interestingly, Glasner and Weiss showed that if $G=Hom(\cantor)$, then $M(G)$ is actually homeomorphic to $\cantor$. 

\section{Dense subsystems homeomorphic to $\NN^\NN$}\label{subsystem} In this section, our aim is to prove the following result.

\begin{thm}\label{thm:bairerestrict}
Let $X$ be a perfect Polish space. For any self-map $T: X \rightarrow X$, there is a dense $G_\delta$ set $Y \subseteq X$ homeomorphic to $\baire$ such that $T(Y) \subseteq Y$.
\end{thm}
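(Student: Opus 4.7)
The plan is to construct, by a Cantor--Luzin recursion, a tree $\{V_s\}_{s\in\NN^{<\NN}}$ of non-empty open subsets of $X$ and a length-preserving monotone map $\psi:\NN^{<\NN}\to\NN^{<\NN}$ satisfying: $V_\emptyset=X$; the children $(V_{s^\frown k})_{k\in\NN}$ of each $V_s$ are pairwise disjoint open sets of diameter at most $2^{-(|s|+1)}$ (in a fixed compatible complete metric) with closures inside $V_s$ and union dense in $V_s$; and $T(V_s)\subseteq V_{\psi(s)}$ for every $s$. Setting $Y:=\bigcap_n\bigcup_{|s|=n}V_s$, the classical Alexandrov--Urysohn argument then shows that $Y$ is a dense $G_\delta$ of $X$ and that the map $h:\baire\to Y$ defined by $\{h(\alpha)\}=\bigcap_n V_{\alpha|n}$ is a homeomorphism. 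The invariance $T(Y)\subseteq Y$ follows: for $y=h(\alpha)\in Y$, monotonicity and length-preservation of $\psi$ produce some $\beta\in\baire$ with $T(y)\in\bigcap_n V_{\beta|n}=\{h(\beta)\}$.

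The construction is by induction on the level $n$, with $V_\emptyset=X$ and $\psi(\emptyset)=\emptyset$. At the inductive step, for each $t$ at level $n$ we must partition $V_t$ into Luzin children $V_{t^\frown j}$ and assign $\psi$-values to the new nodes so that each new child $V_{s^\frown k}$ lies in $T^{-1}(V_{\psi(s)^\frown j_k})$ for some $j_k$. The key tool is a ``representative-point'' trick: fix a countable basis $(B_m)_m$ of $X$, and for each $t$ at level $n$ form a countable set $P_t\subseteq V_t$ by picking one point $p_{s,m}\in T(B_m\cap V_s)\subseteq V_t$ for each pair $(s,m)$ with $\psi(s)=t$ and $B_m\cap V_s\neq\emptyset$. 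A standard Cantor scheme in the perfect Polish space $V_t$ yields Luzin children $V_{t^\frown j}$ with $P_t\subseteq\bigcup_j V_{t^\frown j}$; the inclusion of $p_{s,m}$ in some $V_{\psi(s)^\frown j}$ then forces $V_s\cap T^{-1}\bigl(\bigcup_j V_{\psi(s)^\frown j}\bigr)$ to be dense in $V_s$, so the children of $V_s$ can be refined into the pairwise disjoint open pieces $V_s\cap T^{-1}(V_{\psi(s)^\frown j})$ and $\psi(s^\frown k)=\psi(s)^\frown j_k$ read off.

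The main technical obstacle is that the partitions of distinct level-$n$ nodes at stage $n+1$ are coupled: the children of $V_t$ serve simultaneously as the Luzin children in the scheme and as targets for refinements coming from any $V_s$ with $\psi(s)=t$. When the restriction of $\psi$ to level $n$ has cycles $t_0\to\cdots\to t_{L-1}\to t_0$, this coupling is genuinely circular; one resolves it by applying the representative-point trick to the iterate $T^L:V_{t_0}\to V_{t_0}$ to obtain a Luzin partition of $V_{t_0}$ that is $T^L$-compatible with itself, and then propagating around the cycle to derive compatible partitions of the remaining cycle nodes. Tails and well-founded orbit components are treated in the natural outward order. Since all constraints at each stage are countable and each individual choice is a routine Cantor-scheme construction in a perfect Polish space, careful bookkeeping completes the induction.
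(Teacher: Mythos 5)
The tree you posit cannot exist in general, so the problem is not one of bookkeeping. Since $\psi$ is length-preserving and $T(V_s)\subseteq V_{\psi(s)}$ for every $s$, induction gives $T^k(V_s)\subseteq V_{\psi^k(s)}$ for all $k\geq 0$, with $\psi^k(s)$ of the same length as $s$; hence every level-$n$ node would be a nonempty open set all of whose forward images have diameter at most $2^{-n}$. Your scheme therefore forces a dense open set of points that are Lyapunov stable at every scale. Take $X=[0,1]$ with $T$ the tent map, or $X=\TT$ with $T(z)=z^2$: these maps are topologically exact, so every nonempty open set has some iterate equal to the whole space, and consequently not even a single admissible level-one piece exists (its diameter would have to stay $\leq 1/2$ along the whole forward orbit). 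The theorem is of course still true for such $T$, so it is the approach that fails: no vanishing-diameter scheme that is simultaneously ``Markov at the same level'' for $T$ can capture a sensitive map. A secondary issue, relevant even where the scheme is not outright impossible, is that your cycle step is not a genuine reduction: producing a partition of $V_{t_0}$ that is ``$T^L$-compatible with itself'' is exactly the original self-referential problem, and ``propagating around the cycle'' only lets you pull partitions back (refine domains), whereas the requirement that each piece of the partition of $V_{t_0}$ be mapped by $T$ into a \emph{single} piece of the derived partition of $V_{t_1}$ is a forward constraint that refinement alone cannot deliver without re-entering the circle.

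This is precisely why the paper decouples the three requirements. Invariance and zero-dimensionality come from Lemma \ref{lem:0dimext}, where the pairwise disjoint open families $\{U^j_n\}$ are only required to cover a countable $T$-invariant set $Z$ (no density at each level), and where $T(U^j_n)$ need only lie in a piece of the \emph{coarser} level $j-1$: the targets already exist when level $j$ is built, so there is no circular coupling, and after $j$ steps the orbit of $U^j_n$ is unconstrained, so no stability of $T$ is imposed. Density is then obtained by taking $Z$ to be the forward orbit of a countable dense set (Lemma \ref{lem:baireinv}), and the homeomorphism with $\baire$ comes not from a scheme but from the Alexandrov--Urysohn characterization (zero-dimensional, Polish, nowhere locally compact), which is why the proof first replaces $X$ by a dense, invariant, nowhere locally compact $G_\delta$ subset (Lemma \ref{lem:restrictnwlc}). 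If you want to keep a scheme-based argument you must at least let $\psi$ drop one level (send level $j$ to level $j-1$, as the paper does), but then you still need $V_s\cap T^{-1}\bigl(\bigcup_{|t|=n}V_t\bigr)$ to be dense in each level-$n$ piece, and $T$ may map open sets into meager sets missing $\bigcup_{|t|=n}V_t$; neutralizing that obstruction is exactly the role of the construction in Lemma \ref{lem:restrictnwlc}, not something a routine Cantor-scheme induction provides.
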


\smallskip
This will follow from Lemmas \ref{lem:restrictnwlc} and \ref{lem:baireinv} below. 

\smallskip
We start with a lemma where no assumption is made on the Polish space $X$. Recall that a space $E$ is \emph{$0$-dimensional} if it has a basis consisting of clopen sets. 

\begin{lem}\label{lem:0dimext} Let $X$ be a Polish space, $T: X \rightarrow X$
be continuous and $Z \subseteq X$ be $0$-dimensional with $T(Z) \subseteq Z$. 
Then, there is $Y$
such that
\begin{enumerate}
\item[\rm (1)] $Z \subseteq Y \subseteq X$ and $T(Y) \subseteq Y$;
%\item[\rm (2)] ;
\item[\rm (2)] $Y$ is $0$-dimensional and Polish (\mbox{\textit{i.e.}} a $G_{\delta}$ subset of $X$). 
\end{enumerate}
\end{lem}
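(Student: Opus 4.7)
My plan is to produce $Y$ in two stages: first enlarge $Z$ to a zero-dimensional Polish $G_\delta$ set $Y_0 \subseteq X$ with no regard for $T$-invariance, and then restrict to a $T$-invariant subset of $Y_0$ by intersecting its $T$-preimages. Both of the other required properties — zero-dimensionality and being a $G_\delta$ — are preserved when passing to a $G_\delta$ subset, so the second stage cannot spoil what the first stage achieves.

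For the first stage, I would invoke Lavrentiev's extension theorem. Since $Z$ is zero-dimensional and separable metric (being a subspace of the Polish space $X$), it admits a topological embedding $\phi : Z \hookrightarrow \cantor$; concretely, $\phi$ can be defined as the indicator sequence of a countable clopen basis of $Z$. Viewed as a homeomorphism $Z \to \phi(Z) \subseteq \cantor$, Lavrentiev's theorem lifts $\phi$ to a homeomorphism $\tilde{\phi}: Y_0 \to Y_1$ between $G_\delta$ sets $Z \subseteq Y_0 \subseteq X$ and $\phi(Z) \subseteq Y_1 \subseteq \cantor$. Since $Y_1 \subseteq \cantor$ is zero-dimensional, so is $Y_0$ via $\tilde{\phi}$; being $G_\delta$ in a Polish space, $Y_0$ is Polish.

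For the second stage I set
\[
Y := \bigcap_{n \geq 0} T^{-n}(Y_0).
\]
Each $T^{-n}(Y_0)$ is $G_\delta$ as the continuous preimage of a $G_\delta$, so $Y$ is $G_\delta$ in $X$. Because $T(Z) \subseteq Z$, we have $T^n(Z) \subseteq Z \subseteq Y_0$ for every $n$, whence $Z \subseteq Y$. As a subspace of the zero-dimensional $Y_0$, $Y$ is zero-dimensional. Finally, if $y \in Y$ then $T^{m+1}(y) \in Y_0$ for every $m \geq 0$, so $T(y) \in T^{-m}(Y_0)$ for all such $m$, hence $T(y) \in Y$.

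The main obstacle is Stage 1: producing a zero-dimensional $G_\delta$ extension of $Z$ inside $X$. Lavrentiev's theorem sidesteps the alternative, more hands-on strategy of directly constructing a countable family of open sets in $X$ whose boundaries avoid $Z$ and which form a clopen basis on the resulting set — doable, but notationally heavy, especially at points of $Y_0 \setminus Z$. Stage 2 is a standard invariantization trick that uses nothing about $Y_0$ beyond its $G_\delta$ status.
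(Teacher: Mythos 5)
Your proof is correct, but it takes a genuinely different route from the paper's. The paper argues directly: it isolates a small fact (every $0$-dimensional subset of a Polish space can be covered, for each $\e>0$, by pairwise disjoint open sets of diameter $<\e$) and then inductively constructs disjoint open covers $\{U^j_n\}$ of $Z$ with $\diam(U^j_n)<2^{-j}$ and, crucially, $T(U^j_n)\subseteq U^{j-1}_m$ for some $m$ (the $j$-th cover is obtained by refining the pullbacks $T^{-1}(U^{j-1}_m)$ inside each of them), finally setting $Y:=\bigcap_j\bigcup_n U^j_n$; thus invariance is wired into the covers themselves. You instead decouple the two tasks: first produce a $0$-dimensional $G_\delta$ envelope $Y_0\supseteq Z$ (embedding $Z$ into $\cantor$ by the indicator sequence of a countable clopen basis and applying Lavrentiev's theorem), then invariantize via $Y:=\bigcap_{n\geq 0}T^{-n}(Y_0)$; all your verifications (that $Y$ is $G_\delta$, contains $Z$ because $T^n(Z)\subseteq Z\subseteq Y_0$, is $0$-dimensional as a subspace of $Y_0$, and satisfies $T(Y)\subseteq Y$) are sound. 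What each approach buys: yours is shorter and conceptually cleaner, at the price of invoking Lavrentiev's theorem as a black box; the paper's is self-contained and elementary, needing only its simple covering fact, and yields a concrete $Y$ built from covers adapted to $T$. Note also that your Stage 1 does not really require Lavrentiev: applying the paper's covering fact with $\e=2^{-j}$ for each $j$ and intersecting the unions already gives a $0$-dimensional $G_\delta$ set containing $Z$, after which your $\bigcap_{n}T^{-n}(\cdot)$ trick finishes the proof; this hybrid is arguably the simplest variant of all.
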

\begin{proof} We need the following simple and well known fact, which easily follows from basic definitions.
\begin{fact}\label{fact:fact} Let $E$ be Polish and let $F\subseteq E$ be $0$-dimensional. Then, for any $\e>0$, 
there is a sequence of pairwise disjoint open set $\{U_n\}$ in $E$ such that $\diam(U_n) < \e$ and 
 $F \subseteq \bigcup_n U_n$.
 \end{fact}

By Fact~\ref{fact:fact} we may choose a sequence $\{U^1_n\}$ of pairwise disjoint  open subsets of $X$ 
such that the diameter of each $U^1_n$ is less than 1 and $Z \subseteq \bigcup_n U^1_n$. Now let $V^1_n := T^{-1}(U^1_n)$.
Then the $\{V^1_n\}$ are pairwise disjoint open sets. Moreover, as $T(Z) \subseteq Z$, we have that $Z \subseteq 
\bigcup_{n=1}^{\infty} V^1_n$.  Now we apply Fact~\ref{fact:fact} with $E:=V^1_m$ and $F:=V^1_m \cap Z$, for each $m\in\NN$. This produces a collection of pairwise disjoint open sets $\{U^2_n\}$ in $X$ such that, for each $n$, 
the diameter of $U^2_n$ is less than $1/2$  and $U^2_n \subseteq V^1_m$ for some $m$. Hence the collection $\{U^2_n\}$ satisfy the following properties:
\begin{itemize}
\item $Z \subseteq \bigcup_{n=1}^{\infty} U^2_n$,
\item $\diam (U^2_n) < 1/2$ for each $n$,
\item $T(U^2_n) \subseteq U^1_m$ for some $m$.
\end{itemize}
Continuing in this fashion, we get for each $j\in\mathbb N$ a collection of pairwise disjoint open sets $\{U^j_n;\; n\in\NN\}$ such that
the following properties hold:
\begin{itemize}
\item $Z \subseteq \bigcup_{n=1}^{\infty} U^j_n$,
\item $\diam (U^j_n) < 1/2^j$ for each $n$,
\item $T(U^j_n) \subseteq U^{j-1}_m$ for some $m$.
\end{itemize}

Now we simply let $Y := \bigcap_{j=1}^{\infty} \bigcup_{n=1}^{\infty} U^j_n$. This $Y$ has the required properties.
\end{proof}

\begin{remark}
In the above lemma, one cannot make $Y$ dense in $X$. For example, take $X $ to be the disjoint union of $\reals$ and a countable discrete set $\{ a_n;\; n\in\NN\}$. The topology is the usual one. Let the map $T$ on $X$ defined as follows: it is the identity on $\reals$ and it maps $a_n$
to the $n^{th}$ rational in $\reals$ (under some fixed enumeration). Now take $Z$ to be
the set of irrationals on the line. There is no way to extend this $Z$ to a dense, $T$-invariant and still
 $0$-dimensional set $Y$. 
\end{remark}
\begin{lem}\label{lem:restrictnwlc}
Let $X$ be a perfect Polish space and $T:X \rightarrow X$ be continuous. Then, there is a Polish $X_0 \subseteq X$ dense in $X$ and nowhere locally compact such that $T(X_0) \subseteq X_0$. 
\end{lem}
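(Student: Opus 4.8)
The plan is to build $X_0$ as a decreasing intersection of dense open sets, one for each ``potential obstruction to nowhere local compactness'', while keeping $T$-invariance by pulling each such set back along the forward orbit of $T$. Recall that a Polish space is nowhere locally compact if and only if no nonempty open subset has compact closure; equivalently, for a subspace $X_0$ of $X$, we want that for every point $x\in X_0$ and every neighborhood basis element, the trace on $X_0$ fails to be relatively compact. Since $X$ is perfect, every nonempty open subset of $X$ is uncountable and in particular non-compact after removing enough points, so the idea is to delete, in a controlled way, a countable dense set of points from every ``small'' ball while preserving density and the $G_\delta$ property.

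Concretely, fix a countable basis $\{B_k\}_{k\in\NN}$ of nonempty open sets for $X$ and a complete metric $d$. For each $k$, since $\overline{B_k}$ is a perfect Polish space (or at least contains a perfect subset), it contains a countable dense subset $D_k$ that is itself nowhere dense-able in the relevant sense; more carefully, I would choose inside each $B_k$ a countable set $C_k$ such that $B_k\setminus C_k$ is still dense in $B_k$ but $\overline{B_k}\cap X_0$ cannot be compact because a carefully chosen discrete sequence escaping every compact set has been arranged to lie in $X_0$. The cleaner route: let $W=\bigcup_k \partial'(B_k)$ be a suitable countable union of nowhere dense closed ``thin'' sets whose complement $G=X\setminus W$ is dense $G_\delta$ and has the property that no nonempty relatively open subset of $G$ is relatively compact — this uses that $X$ is perfect so one can always find, in any ball, an infinite closed discrete (in $G$) subset by a fusion argument. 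Then set
\[
X_0 \;=\; \bigcap_{n\geq 0} T^{-n}(G).
\]
Each $T^{-n}(G)$ is $G_\delta$ (continuity of $T$), hence $X_0$ is $G_\delta$, and $T(X_0)\subseteq X_0$ is immediate.

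Two things then remain. First, density of $X_0$: this is where I would need $G$ to be not merely dense $G_\delta$ but ``dynamically robust'', i.e. such that all its iterated $T$-preimages are still dense; a naive choice of $G$ need not satisfy this, since $T$ could collapse large sets onto $W$. To handle it, I would interleave the construction of $W$ with the dynamics — at stage $n$, when removing a thin set to kill local compactness in $T^{-n}(G)$, only remove points whose entire backward image under $T^0,\dots,T^n$ avoids a prescribed dense open set — so that Baire category is preserved at every finite stage and the countable intersection stays dense by the Baire Category Theorem applied in the Polish space $X$. Second, nowhere local compactness of $X_0$: one checks that for any $x\in X_0$ and any $\e>0$, the ball $B(x,\e)\cap X_0$ contains an infinite sequence with no limit point in $X_0$, which follows because $B(x,\e)\cap G$ already does by construction of $G$, and $X_0\subseteq G$ while the escaping sequence was arranged to lie in $X_0$ (its backward orbit avoids $W$).

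The main obstacle I expect is precisely the tension between $T$-invariance and density: unlike in Lemma \ref{lem:0dimext}, where one enlarges a $0$-dimensional set, here we must shrink $X$, and $T^{-n}(G)$ could a priori be much smaller than $G$. The resolution has to be a simultaneous recursion that chooses the ``holes'' of $G$ adaptively with respect to finitely many iterates of $T$ at a time, so that at each finite stage one only intersects finitely many dense open sets (keeping density manifest) and the nowhere-locally-compact requirement is met by a diagonal/fusion scheme over the countable basis. Verifying that the limit set is genuinely nowhere locally compact — rather than merely non-locally-compact at the points we explicitly handled — is the step that needs the most care, and I would do it by a standard ``catch every basic open set infinitely often'' bookkeeping during the recursion.
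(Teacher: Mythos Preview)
Your outline has the right overall shape --- set $X_0=X\setminus Z$ with $Z$ an $F_\sigma$ meager set satisfying $T^{-1}(Z)\subseteq Z$ --- but there is a genuine gap at exactly the point you flag, and your proposed workaround (an adaptive recursion ``interleaving $W$ with the dynamics'') is not an argument, only a hope. Nothing you wrote guarantees that at each stage you can remove a nowhere dense set that simultaneously (a) kills local compactness in a given basic open set and (b) has all its $T$-preimages nowhere dense; indeed, for an arbitrary continuous $T$ there is no reason a thin set chosen for (a) should satisfy (b).

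You are also working harder than necessary on nowhere local compactness. You do not need to manufacture escaping discrete sequences inside each basic open set. Once $X_0$ is a dense $G_\delta$ in the perfect space $X$ and its complement $Z$ is \emph{dense}, nowhere local compactness of $X_0$ is automatic: if some nonempty relatively open $U\cap X_0$ had compact closure in $X_0$, that closure would be closed in $X$ with empty interior (since $Z$ is dense), contradicting density of $X_0$. So the entire task reduces to producing a countable dense $D\subseteq X$ such that every $T^{-m}(D)$ is meager; then $Z:=\bigcup_{m\ge 0}T^{-m}(D)$ is dense, $F_\sigma$, meager, and backward-invariant. The paper achieves this with a single clean trick you are missing: let $A$ be the (meager) union of all images $T^m(B_n)$ that happen to be meager, and choose $D$ countable dense in $X\setminus A$. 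If some $T^{-m}(D)$ were nonmeager it would, being $F_\sigma$, contain a basic $B_n$; but then $T^m(B_n)\subseteq D$ is countable, hence meager, hence $T^m(B_n)\subseteq A$, contradicting $D\cap A=\emptyset$. This replaces your unspecified recursion with a one-line self-referential argument.
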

\begin{proof} We will find a meager, $F_{\sigma}$ set $Z \subseteq X$ dense in $X$ such that $T^{-1}(Z) \subseteq Z$. It will then suffice to let $X_0:= X \setminus Z$. As $X$ is Polish and $Z$ an $F_{\sigma}$ meager set, we have that $X_0$ is Polish and dense in $X$. Moreover, $X_0$ is nowhere locally compact. Indeed, assume that for some nonempty open set $U$ in $X$, the set
$X_0 \cap U$ has compact closure in $X_0$. Then, $\overline{(X_0 \cap U)}\cap X_0$ is compact and hence is closed in $X$, and it has empty interior in $X$ since $X\setminus X_0$ is dense in $X$. In particular, $X_0\cap U$ is nowhere dense in $X$, a contradiction because $X_0$ is dense in $X$.%Then, $U = (Y \cap U) \cup (Z \cap U)$ is meager, contradicting the Baire category theorem. 

To complete the proof, let us construct the desired $Z$.  Let $B_1, B_2, \ldots$ be an enumeration of some countable basis of open sets for $X$. Let 
\[
S= \{(n,m) \in \nat \times \nat \;:\;  T^m(B_n) \text{ is meager}\}.
\]
Let $U = \bigcup_{(n,m)\in S} B_n$. Let $A =\bigcup \{T^m(B_n);\;(n,m) \in S\}$. Then, $A$ is meager in $X$.
Let $D$ be a countable dense subset of $X \setminus A$. Clearly, each of $T^{-m}(D)$, $m\geq 0$ is $F_{\sigma}$. 
We claim, moreover, that $T^{-m}(D)$ is meager. This is clear for $m=0$ as $D$
is countable. To obtain a contradiction, assume that for some $ m \ge 1$, $T^{-m}(D)$ is not meager.
As $T^{-m}(D)$ is $F_{\sigma}$, it must contain a nonempty open set. Let $n \in \nat$ be such that
$B_n \subseteq T^{-m}(D)$.  Then, we have that $T^m(B_n)$ is meager since it is a subset of the countable
set $D$. Hence, $T^m(B_n) \subseteq A$, contradicting that $D\cap A = \emptyset$.

Let $Z = \bigcup_{m \ge 0}T^{-m} (D)$. Then, $Z$ is an $F_{\sigma}$, dense, and meager subset of $X$.
Moreover, $T^{-1}(Z) \subseteq Z$, concluding the proof of the lemma.
\end{proof}

\begin{lem}\label{lem:baireinv} Let $X$ be a nowhere locally compact Polish space and let $T:X \rightarrow X$ be continuous. Then there is a dense $G_\delta$ set $Y\subseteq X$ such that $T(Y)\subseteq Y$ and $Y$ is homeomorphic to $\baire$. Moreover, one may require that $Y$ contains any prescribed countable set $C\subseteq X$.
\end{lem}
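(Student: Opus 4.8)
The plan is to combine the characterization of $\baire$ due to Alexandrov--Urysohn (a nonempty Polish space is homeomorphic to $\baire$ iff it is $0$-dimensional and nowhere locally compact) with Lemma~\ref{lem:0dimext}. First I would reduce to producing a set $Y$ that is dense, $G_\delta$, $0$-dimensional, $T$-invariant, nowhere locally compact, and contains the prescribed countable set $C$: once such $Y$ is in hand, the Alexandrov--Urysohn theorem finishes the proof, since $Y$ is Polish being $G_\delta$ in the Polish space $X$. The nowhere local compactness of $Y$ should come essentially for free: since $X$ itself is nowhere locally compact and $Y$ is dense $G_\delta$ (hence comeager) in $X$, no nonempty relatively open subset of $Y$ can have compact closure in $Y$; I would spell this out with the same kind of Baire-category argument used in the proof of Lemma~\ref{lem:restrictnwlc}.

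The core of the argument is then to find a dense $0$-dimensional $T$-invariant $G_\delta$ set containing $C$. Here is where I would feed a good ``seed" into Lemma~\ref{lem:0dimext}. I want a $0$-dimensional set $Z_0$ with $T(Z_0)\subseteq Z_0$, dense in $X$, and with $C\subseteq Z_0$. A natural candidate: enlarge $C$ to its forward orbit $C':=\bigcup_{n\ge 0}T^n(C)$, which is still countable and now forward-invariant; then adjoin a countable dense set. But a countable union of a forward-invariant set with an arbitrary countable dense set need not be $T$-invariant, so instead I would take $Z_0:=\bigcup_{n\ge 0}T^n(D\cup C)$ where $D$ is any countable dense subset of $X$; this is countable, dense, forward $T$-invariant, and contains $C$, and every countable metrizable space is $0$-dimensional. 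Applying Lemma~\ref{lem:0dimext} to $Z:=Z_0$ yields $Y'$ with $Z_0\subseteq Y'\subseteq X$, $T(Y')\subseteq Y'$, and $Y'$ $0$-dimensional Polish. Since $Y'\supseteq D$ it is dense in $X$, and it contains $C$.

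It remains to make sure $Y'$ is a $G_\delta$ in $X$ (automatic, as it is Polish and embedded in a Polish space) and that it is nowhere locally compact. The possible gap is that $Y'$ produced by Lemma~\ref{lem:0dimext} need not itself be nowhere locally compact; Lemma~\ref{lem:0dimext} only guarantees $0$-dimensionality and Polishness. To repair this I would intersect with the nowhere-locally-compact dense $T$-invariant Polish set supplied by Lemma~\ref{lem:restrictnwlc}, or rather run the construction with the target space already being such an $X_0$: replace $X$ at the outset by a dense, $T$-invariant, nowhere locally compact Polish $X_0\subseteq X$ containing $C'$. One subtlety is that Lemma~\ref{lem:restrictnwlc} as stated does not mention prescribed points, so I would check its proof goes through with $C'$ removed from consideration, i.e. that the meager $F_\sigma$ set $Z$ constructed there can be taken disjoint from the countable set $C'$; since $C'$ is countable and $X$ is perfect, one can choose the dense set $D$ in that proof inside $(X\setminus A)\setminus C'$, keeping $C'\subseteq X_0$. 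Then apply Lemma~\ref{lem:0dimext} inside $X_0$, and since a $G_\delta$-in-$X_0$ dense subset of the nowhere locally compact $X_0$ is again nowhere locally compact, the resulting $Y$ has all required properties. I expect the main obstacle to be exactly this bookkeeping: threading the prescribed countable set $C$ through both Lemma~\ref{lem:restrictnwlc} and Lemma~\ref{lem:0dimext} while simultaneously preserving density, forward invariance, $0$-dimensionality, and nowhere local compactness.
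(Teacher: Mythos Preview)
Your first two paragraphs already contain the paper's proof: seed Lemma~\ref{lem:0dimext} with the forward orbit of a countable dense set containing $C$, then invoke the Alexandrov--Urysohn characterization. The worry you raise in the third paragraph is unfounded, and the paper does not share it: as you yourself note at the outset, nowhere local compactness of $Y$ is automatic from density alone. If some $y\in Y$ had a compact neighborhood $K$ in $Y$, pick $V$ open in $X$ with $y\in V\cap Y\subseteq K$; since $Y$ is dense, $V\subseteq\overline{V\cap Y}\subseteq K$, so $\overline V$ is compact in $X$, contradicting the hypothesis on $X$. No comeagerness and no appeal to Lemma~\ref{lem:restrictnwlc} is needed here; that lemma enters only later, in the proof of Theorem~\ref{thm:bairerestrict}, to reduce a perfect Polish space to a nowhere locally compact one \emph{before} the present lemma is applied. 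So you can simply delete the third paragraph.
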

\begin{proof} Let $C$ be any countable subset of $X$. As $X$ is separable, we may expand $C$ to be a countable set which is dense in $X$. 
Let $Z = \bigcup_{n=0}^{\infty} T^n(C)$. Clearly, $T(Z) \subseteq Z$. Moreover, $Z$ is $0$-dimensional
as it is countable.  Let $Y$ be given by Lemma~\ref{lem:0dimext}. Only that $Y$ is homeomorphic to $\baire$ needs an argument.
We already know that $Y$ is $0$-dimensional and Polish. 
Finally, $Y$ is nowhere locally compact as $X$ is and $Y$ is dense in $X$. Hence, $Y$ is homeomorphic to
 $\baire$. 
\end{proof}
\begin{remark}\label{cor:banachbaire}
It follows in particular that if $T:G\to G$ is a self-map of a non-locally compact Polish group $G$ then, for any given countable set $C \subseteq G$, there is $Y \subseteq G$ homeomorphic to $\baire$ such that $C \subseteq Y$
and $T(Y) \subseteq Y$.
\end{remark}

\begin{proof}[Proof of Theorem \ref{thm:bairerestrict}] This is now immediate by applying Lemma~\ref{lem:restrictnwlc} and then Lemma~\ref{lem:baireinv}.
\end{proof}

\section{A universal linear operator}\label{linear} In this section, we prove the following result, which says essentially that any bounded linear operator on a separable Banach space is a linear factor of a single ``universal" operator acting on $\ell_1$.
\begin{thm}\label{thm: l1} There exists a bounded linear operator $U: \el \rightarrow \el$ with $\Vert U\Vert= 1$
such that any bounded linear operator $T:X \rightarrow X$ on a separable Banach space $X$ is 
a linear factor of $\rho\cdot U$ for every $\rho\geq \Vert T\Vert$. 
\end{thm}
\begin{proof} We need the following fact. This is probably well known but we include an outline of the proof for completeness.
\begin{fact}\label{lem:universal121}
There exists a 1-1 map $\mu :\nat \rightarrow \nat$ such that, for any other 1-1 map $\sigma : \nat \rightarrow \nat$, 
one can find a set $A \subseteq \nat$ and a bijection $\pi_A: \nat \twoheadrightarrow A$ such that $\mu(A)=A$ and $\sigma = \pi_A^{-1}\mu\pi_A$.%the following holds.
%\begin{itemize}
%\item $\sigma = \pi_A^{-1}\mu\pi_A$, and 
%\item $\mu(\nat \setminus A)  \subseteq \nat \setminus A$.
%\end{itemize}
\end{fact}
\begin{proof}[Proof of Fact \ref{lem:universal121}] For any 1-1 map $\sigma : \nat \rightarrow \nat $, let $G_{\sigma}$ be the digraph on $\nat$ induced by $\sigma$; that is :
 $\overrightarrow{ij}$ is a directed edge of $G_{\sigma}$ iff $\sigma(i)=j$. Then, each component $C$ of $G_{\sigma}$
has one of the following forms:
\begin{itemize}
\item $C$ is a cycle of length $n$ for some $n \in \nat$,
\item there is $i_0 \in C$ such that $C= \{i_0,i_1,i_2, \ldots\}$ where all $i_k$'s are distinct and the only
directed edges are of the form $\overrightarrow{i_ki_{k+1}}$, or
\item $C = \{\ldots, i_{-1}, i_0, i_1, \ldots \}$ where all $i_k$'s are distinct and the only
directed edges are of the form $\overrightarrow{i_ki_{k+1}}$.
\end{itemize}
Now, consider any $\mu$ whose digraph contains infinitely components 
of each type described above.
\end{proof}

To prove the theorem, we first observe that it suffices to construct  $U$ so that every bounded linear 
operator with norm at most 1 (on a separable Banach space) is a linear factor of $U$. %Indeed, assume that $U$ has this property. Then, given any operator $T$
% with $\Vert T\Vert >1$, consider $\widetilde T= \frac{T}{\|T\|}\cdot$% Suppose this is the 
%case and $T$ is an arbitrary bounded linear operator on the separable Banach space $X$.
% Indeed, the zero operator on any separable Banach space is factor of any operator
%on $\el$ as every separable Banach spaces is the image of an open bounded linear map of $\el$. 
%({\bf \color{blue} Reference??})
%Now assume that $T$ is so that $\|T \| >0$. Then, $\frac{T}{\|T\|}$ is factor of $U$ by hypothesis.
%As the the projection map $\pi$ is linear, we have that $T$ is a factor of $\|T\| U$.

Let $\mu$ be as in Fact~\ref{lem:universal121}. Let $\{e_n\}$ be the standard basis of $\el$, i.e.,
$e_n$ is $0$ in every coordinate except the $n^{th}$ coordinate where it takes the value $1$. We define $U: \el \rightarrow \el$ simply 
by $U(e_n) = e_{\mu (n)}$. It is clear that $U$ is a bounded linear operator of norm 1.  Let
$T: X \rightarrow X$ be a bounded linear operator acting on a separable Banach space $X$, with $\|T\| \leq 1$. Since $X$
is separable and $\|T\|\leq 1$, one can find a countable dense subset $D$ of $B_X$,
the unit ball in $X$, so that $T(D) \subseteq D$.  This may be done by starting with any countable set
dense in the unit ball and taking its union with its forward orbit. We let $\{z_n\}$ be a enumeration of $D$
so that each element of $D$ appears infinitely often in $\{z_n\}$. Now we define
$\sigma : \nat \rightarrow \nat$. For any $i \in \nat$, we have $T(z_i) = z_j$ for some $j$. Moreover, since every element of 
$D$ appears infinitely often in the sequence $\{z_n\}$, we may choose a 1-1 map $\sigma:\NN\to\NN$ such that $T(z_i)=z_{\sigma (i)}$ for all $i\in\NN$. 
Now, let $A \subseteq \nat$
and $\pi_A$ be as in Fact~\ref{lem:universal121}. Then there is a uniquely defined bounded linear operator $\pi : \el 
\rightarrow X$ (with $\Vert\pi\Vert\leq 1$) such that
\begin{displaymath}
   \pi(e_i)  = \left\{
     \begin{array}{lr}
       z_{\pi_A^{-1}(i)} & \hbox{if $i \in A$,}\\
       0 & \hbox{if  $i \notin A$.}
     \end{array}
   \right.
\end{displaymath} 

The operator $\pi$ is \emph{onto} because $\pi(B_{\el})$ contains all of $D$, which is dense
in $B_X$; see \mbox{e.g.} \cite[Lemma 2.24]{FHetc}. %Hence, $\pi$ is also open by the open mapping theorem.

Finally, let us show show that $T$ is a factor of $U$ with witness $\pi$, \mbox{\textit{i.e.}}. $\pi U=T\pi$. It is enough to check that $\pi U$ and
$T\pi$ are equal on $\{e_i\}$ as all maps are linear and continuous and $\{e_i\}$ 
spans $\el$.  Let $i \notin A$. Then, by Fact~\ref{lem:universal121} $\mu(i) \notin A$.
Hence, we have that neither $i$ nor $\mu(i)$ is in $A$, so that
\[ \pi(U(e_i))= \pi(e_{\mu(i)}) =0\qquad{\rm and}\qquad  T(\pi(e_i)) = T(0) =0.
\]
Now suppose $i \in A$. Then, 
\[ T(\pi(e_i)) = T(z_{\pi_A^{-1}(i)}) = z_{\sigma(\pi_A^{-1}(i))}= z_{\pi_A^{-1}(\mu(i))}=\pi(e_{\mu(i)}) = \pi (U(e_i)).
\]
\end{proof}

\begin{remark}\label{obvious} One really has to consider all multiples of $U$: it is not possible to find a single bounded linear operator $U$ 
so that every bounded linear operator $T: X \rightarrow X$ is a linear factor of it. Indeed, if an operator $T$ is a linear factor of some operator $U$, then there is a constant $C$ such that $\Vert T^n\Vert\leq C\, \Vert U^n\Vert$ for all $n\in\NN$; in particular, $\lambda Id$ cannot be a factor of $U$ if $\vert \lambda\vert>\Vert U\Vert$. To see this, let $\pi$ witness the fact that $T:X \rightarrow X$ is a factor of $U:Y \rightarrow Y$. By the Open Mapping Theorem, there a constant $k$
such that $B_X \subseteq k\pi(B_{Y}) $. Let $A \ge 1$ be such that $\|\pi \| \le A$. Then, $C = k A$ has the property
that $\Vert T^n\Vert\leq C\, \Vert U^n\Vert$ for all $n\in\NN$. 
\end{remark}

{In spite of this last remark, one can find a single \emph{Fr\'echet} space operator which is universal for separable Banach space operators. (Interestingly, it is not known if there exist quotient universal Fr\'echet spaces.)} %\textbf{\color{blue} Should we think about this? Maybe...}%Of course, the weight of the Banach spaces have to have some sort of bound otherwise it contradicts set theory.}
\begin{cor} Let $E:=(\ell_1)^\NN$, the product of countably many copies of $\ell_1$, endowed with the product topology. There exists a continuous linear operator $U_E:E\to E$ such that any bounded operator $T$ acting on a separable Banach space is a linear factor of $U_E$.
\end{cor}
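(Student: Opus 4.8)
The plan is to reduce the corollary to Theorem~\ref{thm: l1} by absorbing the various scaling constants $\rho\ge \Vert T\Vert$ into the product structure of $E=(\ell_1)^\NN$. Let $U:\ell_1\to\ell_1$ with $\Vert U\Vert=1$ be the operator produced by Theorem~\ref{thm: l1}, and fix an increasing sequence of positive reals $\rho_k\to\infty$, say $\rho_k=k$. Define $U_E:E\to E$ coordinatewise by $U_E\bigl((x_k)_{k\in\NN}\bigr)=(\rho_k\, U x_k)_{k\in\NN}$. This is linear, and it is continuous for the product topology because each coordinate map $x\mapsto \rho_k Ux$ is continuous on $\ell_1$; note that continuity here does \emph{not} require uniform boundedness of the $\rho_k$, since a basic neighbourhood of $0$ in $E$ only constrains finitely many coordinates.

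Now let $T:X\to X$ be a bounded operator on a separable Banach space. Choose $k$ large enough that $\rho_k\ge \Vert T\Vert$. By Theorem~\ref{thm: l1}, $T$ is a linear factor of $\rho_k\cdot U$, say via a bounded linear surjection $\pi_k:\ell_1\to X$ with $T\pi_k=\pi_k(\rho_k U)$. Let $P_k:E\to\ell_1$ be the projection onto the $k$-th coordinate, and set $\pi:=\pi_k\circ P_k:E\to X$. Then $\pi$ is continuous (composition of a continuous projection and a bounded operator), linear, and onto since $\pi_k$ is onto and $P_k$ is onto. To check $T\pi=\pi U_E$, evaluate at an arbitrary $(x_j)_j\in E$: on the left, $T\pi\bigl((x_j)_j\bigr)=T\pi_k(x_k)$; on the right, $\pi U_E\bigl((x_j)_j\bigr)=\pi_k\bigl(P_k((\rho_j Ux_j)_j)\bigr)=\pi_k(\rho_k Ux_k)=T\pi_k(x_k)$, using the factoring identity for $\rho_k\cdot U$. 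Hence the diagram commutes and $T$ is a linear factor of $U_E$.

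The only genuinely delicate point is the continuity of $U_E$ as a map on the Fr\'echet space $E$ despite the unbounded multipliers $\rho_k$; this is precisely why the product (rather than, say, a normed direct sum) is the right ambient space, and it is exactly the feature that circumvents Remark~\ref{obvious}. Everything else is a routine diagram chase together with the surjectivity bookkeeping. One should also remark in passing that $E=(\ell_1)^\NN$ with the product topology is indeed a separable Fr\'echet space, so the statement lives in the category one expects.
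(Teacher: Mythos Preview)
Your proof is correct and follows essentially the same route as the paper's: both define $U_E$ coordinatewise via $(x_k)_k\mapsto (k\,U x_k)_k$ and factor a given $T$ through the $k$-th coordinate projection composed with the factoring map for $k\cdot U$. You supply a bit more detail on the continuity of $U_E$ in the product topology, but the argument is otherwise identical.
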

\begin{proof} Let $U:\ell_1\to\ell_1$ be the operator given by Theorem \ref{thm: l1}. Then define $U_E:E\to E$ by 
$$U_E(x_1,x_2,x_3,\dots ):=(U(x_1), 2\cdot U(x_2), 3\cdot U(x_3),\dots )\, .$$
This is indeed a continuous linear operator, and it is readily checked that $U_E$ has the required property. Indeed, if $T:X\to X$ is a bounded linear operator ($X$ a separable Banach space) then one can find $n\in\mathbb N$ such that $T$ is a linear factor of $n\cdot U$, with factoring map $\pi:\ell_1\to X$. Then the operator $\widetilde \pi:E\to X$ defined by $\widetilde \pi (x_1,x_2,\dots ):= \pi(x_n)$ shows that $T$ is a linear factor of $U_E$.
\end{proof}

\section{Common extensions of compact dynamical systems}

In this section, we prove some ``common extension" results for self-maps of \emph{compact metric spaces}. In what follows, we denote by $\mathfrak D$ the collection of all dynamical systems $(X,T)$ where $X$ is a compact metric space; or, equivalently, the collection of all self-maps of compact metric spaces.

\subsection{The extension property}\label{extensionppty} A map $\pi:E \rightarrow X$ between topological spaces has the {\em extension property} if, for every map $\phi:E \rightarrow X$, there is a map $S=S_{\phi}:E \rightarrow E$ such that $\phi =\pi S$. Note that such a map $\pi$ is necessarily onto (consider constant maps $\phi$). The following theorem can be found in \cite{W}.

\begin{thm}\label{Weihr} For any compact metric space $X$, there exists a map $\pi:\cantor\to X$ with the extension property.
\end{thm}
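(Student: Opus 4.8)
The statement to prove is Theorem~\ref{Weihr}: for any compact metric space $X$, there is a map $\pi:\cantor\to X$ with the extension property. The plan is to reduce this to the existence of a \emph{surjective} map $q:\cantor\to X$ together with a continuous section-like device for pulling maps back. First I would fix a surjection $q:\cantor\to X$, which exists by the classical universality of the Cantor space for compact metric spaces. The point to extract is that this surjection can be chosen so that it has a continuous right inverse ``at the level of maps'': more precisely, one wants a continuous map $r:X\to\cantor$ would be too much (it would split $q$ and make $X$ a retract of $\cantor$, forcing $X$ to be $0$-dimensional), so instead I would aim to use the metric/combinatorial structure of $\cantor$ to lift.

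The key idea I would pursue is the following. Identify $\cantor$ with the space of branches of the full infinite binary tree $2^{<\NN}$, and build the surjection $q$ by a Cantor scheme: assign to each finite binary string $s\in 2^{<\NN}$ a nonempty closed set $F_s\subseteq X$ with $F_\emptyset = X$, $F_s = F_{s0}\cup F_{s1}$, and $\diam(F_s)\to 0$ along each branch, so that $q(\alpha)$ is the unique point of $\bigcap_n F_{\alpha\restriction n}$. Now given an arbitrary map $\phi:\cantor\to X$, I want to produce $S:\cantor\to\cantor$ with $q\circ S = \phi$. The natural approach is to define $S(\alpha)$ by recursively choosing, using the scheme, a branch $\beta$ such that $\phi(\alpha)\in F_{\beta\restriction n}$ for every $n$: at each stage, $\phi(\alpha)$ lies in $F_{\beta\restriction n} = F_{(\beta\restriction n)0}\cup F_{(\beta\restriction n)1}$, so it lies in at least one of the two children, and I pick the child (say the ``leftmost'' one containing it) to define $\beta(n)$. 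This gives a well-defined $S$ with $q(S(\alpha)) = \phi(\alpha)$; the work is to check \emph{continuity} of $S$.

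The main obstacle will be exactly this continuity. A naive ``leftmost child'' rule need not be continuous, because whether $\phi(\alpha)\in F_{s0}$ can flip discontinuously as $\alpha$ varies even when $\phi$ is continuous — the sets $F_{s0}$ and $F_{s1}$ overlap on their common boundary. The remedy is to build the Cantor scheme more carefully so that the decision is ``clopen'': I would arrange the scheme so that at each node the two children are separated by a \emph{continuous real-valued test}, or better, build $q$ through an intermediate $0$-dimensional space. Concretely, I would choose a decreasing sequence of finite open covers $\mathcal U_n$ of $X$ with mesh $\to 0$, refine them into a tree of sets that are simultaneously used to define $q$, and exploit that $\phi^{-1}$ of an open set is open in $\cantor$, hence a union of basic clopen sets, so that on a sufficiently fine clopen partition of $\cantor$ the choice of child is locally constant. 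Running this recursion, at stage $n$ the map $\alpha\mapsto \beta\restriction(n{+}1)$ is locally constant, hence continuous, and since $\cantor$ carries the inverse-limit topology, $S=\lim_n(\alpha\mapsto\beta\restriction n)$ is continuous. Finally, since $\phi$ was an arbitrary map (taking $\phi$ constant shows $q=\pi$ itself is onto), $\pi:=q$ has the extension property, which completes the proof.
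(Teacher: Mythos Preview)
The paper does not prove this theorem directly (it cites \cite{W}), but the relevant construction of $\pi$ is reproduced in the proof of Lemma~\ref{lem:strongext}, following \cite{kurka}. Your outline is in the same spirit: build $\pi$ from a tree-indexed scheme of small sets covering $X$, then lift a given $\phi$ by making the branch-choice locally constant on clopen pieces of the domain.

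Two points where the paper's version is sharper than your sketch. First, a strict binary scheme $F_s=F_{s0}\cup F_{s1}$ with both children nonempty and diameters shrinking to $0$ need not exist for a general compact metric $X$; the paper instead works on $\Lambda=\prod_i J_i$ with finite $J_i$ of size $\geq 2$, which is still homeomorphic to $\cantor$. Second, and this is the substantive gap, the continuity mechanism: the paper uses \emph{open} sets $V_s,W_s$ with $\{W_{sj}:j\in J_{k+1}\}$ an open cover of $\overline V_s$ and $V_{sj}=V_s\cap W_{sj}$, picks a Lebesgue number $\varepsilon_k$ for these finitely many covers, and then by uniform continuity of $\phi$ chooses $m_k$ so that each $\phi([s])$ with $|s|=m_k$ has diameter $<\varepsilon_k$; this forces $\phi([s])\subseteq V_{t}$ for some $t$ with $|t|=k$, and the resulting assignment $s\mapsto t(s)$ is finitely determined and can be arranged recursively so that longer $s$ yield longer $t$. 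Your ``leftmost child of closed sets'' rule fails (as you observe), and your replacement ``locally constant on a fine clopen partition'' is the right intuition but does not yet explain why the choices at successive levels cohere into a single branch of the tree; the Lebesgue-number argument is precisely the device that secures this compatibility.
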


This is a strengthening of the very well known result saying that every compact metric space is a continuous image of $\cantor$. In fact, from this lemma one gets immediately the following result from \cite{BS}. (It seems that the two results were proved independently at about the same time.)
\begin{cor}\label{BalcSim} Every $(X,T)\in\mathfrak D$ is a factor of some dynamical system of the form $(\cantor ,S)$.
\end{cor}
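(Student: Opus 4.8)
The plan is to deduce Corollary \ref{BalcSim} directly from Theorem \ref{Weihr} by a simple diagram chase. Given $(X,T)\in\mathfrak D$, apply Theorem \ref{Weihr} to obtain a map $\pi:\cantor\to X$ with the extension property; in particular $\pi$ is onto. Now consider the composition $\phi:=T\pi:\cantor\to X$, which is a (continuous) map from $\cantor$ into $X$. By the extension property of $\pi$, there exists a map $S=S_\phi:\cantor\to\cantor$ such that $\phi=\pi S$, that is, $T\pi=\pi S$. This is exactly the statement that $(X,T)$ is a factor of $(\cantor,S)$, with factoring map $\pi$.

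The only thing to check is that this is genuinely a commuting square of the required form: $S$ is a self-map of $\cantor$, $\pi$ is a surjection $\cantor\twoheadrightarrow X$, and the relation $T\pi=\pi S$ holds, which is precisely the commutativity of the diagram defining an extension. All of these are immediate from the construction, so there is essentially no obstacle here: the entire content has been pushed into Theorem \ref{Weihr}. One small remark worth making is that the argument uses the extension property only for the single map $\phi=T\pi$, so in fact a weaker hypothesis on $\pi$ (lifting of this one map) would suffice; but since Theorem \ref{Weihr} is available off the shelf, there is no reason to isolate the weaker version.

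If one wanted a self-contained proof not invoking Theorem \ref{Weihr}, the natural route would be to build $S$ and $\pi$ simultaneously by an inverse-limit / coding construction: represent $X$ as a quotient of $\cantor$ via a scheme of finite covers refining in diameter, and then, using continuity of $T$, pass to a finer subscheme on which $T$ can be lifted coordinatewise to a shift-like map on the coding tree. This is more work and is exactly what the cited references \cite{W}, \cite{BS} carry out, so for the purposes of this corollary the clean move is simply to quote Theorem \ref{Weihr} and perform the one-line lift described above.

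Thus the proof is: let $\pi:\cantor\to X$ have the extension property (Theorem \ref{Weihr}), apply that property to $\phi=T\pi$ to get $S:\cantor\to\cantor$ with $\pi S=T\pi$, and observe that $(\cantor,S)$ is then an extension of $(X,T)$ via $\pi$. The expected ``main obstacle'' is really just recognizing that no obstacle remains once Theorem \ref{Weihr} is in hand — the result is an immediate corollary, as its placement in the text indicates.
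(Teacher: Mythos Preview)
Your proof is correct and follows exactly the paper's approach: apply the extension property of $\pi$ to $\phi:=T\pi$ to obtain $S$ with $\pi S=T\pi$. The paper's proof is the same one-line argument.
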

\begin{proof} Given $\pi:E\to X$ with the extension property, just consider the map $\phi:=T\pi :\cantor\to X$.
\end{proof}

\smallskip
At this point, we digress a little bit by showing that a result of the same kind holds true for \emph{Polish} dynamical systems, the ``universal" space being (as should be expected) the Baire space $\NN^\NN$. This may be quite well known, but we couldn't locate a reference.%Recall that a map $\pi:E\to X$ is said to have the extension property if, for every map $\phi:E\to X$, one can find a map $S:E\to E$ such that $\phi=\pi S$. 
\begin{prop} For any Polish space $X$, there exists a map $\pi:\NN^\NN\to X$ with the extension property. Consequently, any self-map $T:X\to X$ is a factor of a self-map of $\NN^\NN$.
\end{prop}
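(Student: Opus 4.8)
The plan is to mimic the Cantor-space argument (Theorem~\ref{Weihr} and Corollary~\ref{BalcSim}) but with $\NN^\NN$ in place of $\cantor$, exploiting the fact that every Polish space is a continuous open image of $\NN^\NN$ in a suitably controlled way. Concretely, I would first fix a complete compatible metric $d\le 1$ on $X$ and build a Lusin-type scheme: a family of nonempty open sets $(V_s)_{s\in\NN^{<\NN}}$ with $V_\emptyset=X$, $\diam(V_s)\le 2^{-|s|}$, $\overline{V_{s^\frown n}}\subseteq V_s$, and $V_s=\bigcup_{n\in\NN}V_{s^\frown n}$ for every $s$ (the last ``covering'' condition is what replaces the partition-into-clopen-pieces used over $\cantor$, and it is available precisely because we are allowed countably many branches at each node). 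For $\alpha\in\NN^\NN$ the intersection $\bigcap_n V_{\alpha|n}$ is then a single point; call it $\pi(\alpha)$. Continuity and surjectivity of $\pi:\NN^\NN\to X$ are routine from the scheme, and the key extra feature is that $\pi$ is \emph{open}: $\pi(\{\alpha:\alpha\supseteq s\})=V_s$ by the covering condition, so $\pi$ maps basic clopen sets onto basic open sets.

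Next I would verify the extension property. Given an arbitrary map $\phi:\NN^\NN\to X$, I want a map $S:\NN^\NN\to\NN^\NN$ with $\pi S=\phi$; equivalently I must continuously select, for each $\alpha$, a branch $S(\alpha)$ with $\bigcap_n V_{S(\alpha)|n}=\{\phi(\alpha)\}$. I would construct $S$ by a tree-labelling recursion: define $S$ on longer and longer initial segments so that, for $\alpha$ in the basic clopen set $N_t=\{\beta:\beta\supseteq t\}$ of sufficiently large length, $S(\alpha)$ has a prescribed initial segment $s(t)$ of length $|t|$ with $\phi(N_t)\subseteq V_{s(t)}$ and $s(t)\sqsubseteq s(t')$ whenever $t\sqsubseteq t'$. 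The point is that by continuity of $\phi$ and $\diam(V_s)\to 0$, for each $\alpha$ and each $m$ there is a neighbourhood $N_t$ of $\alpha$ with $\phi(N_t)$ contained in some $V_s$, $|s|=m$; choosing these coherently along the tree $\NN^{<\NN}$ (refining at stage $m+1$ a choice already made at stage $m$, which is possible by the covering condition $V_s=\bigcup_k V_{s^\frown k}$) yields a well-defined, continuous $S$ with $V_{S(\alpha)|m}\supseteq\phi(N)$ for a neighbourhood $N$ of $\alpha$, hence $\phi(\alpha)=\bigcap_m V_{S(\alpha)|m}=\pi(S(\alpha))$.

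With the Proposition in hand, the ``consequently'' is immediate exactly as in Corollary~\ref{BalcSim}: given $T:X\to X$, apply the extension property to the map $\phi:=T\pi:\NN^\NN\to X$ to obtain $S:\NN^\NN\to\NN^\NN$ with $\pi S=T\pi$, i.e.\ $(\NN^\NN,S)$ is an extension of $(X,T)$; note $\pi$ is onto, so this genuinely is a factor map.

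The main obstacle I anticipate is organizing the recursive construction of $S$ so that the branch assignments $t\mapsto s(t)$ are simultaneously (i) coherent under extension of $t$, (ii) consistent with the tree structure on the target side (so $S$ is actually defined by a monotone map on $\NN^{<\NN}$ and hence continuous), and (iii) fine enough that $\bigcap_m V_{S(\alpha)|m}$ pins down $\phi(\alpha)$ rather than just a set containing it. All three are handled by interleaving the refinement ``increase $|s|$'' with the shrinking of $\phi$-images guaranteed by continuity of $\phi$, but the bookkeeping is the only non-formal part; everything else (the Lusin scheme, openness of $\pi$, the final deduction) is standard descriptive set theory.
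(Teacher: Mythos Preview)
Your proposal is correct and follows essentially the same route as the paper: the identical Lusin scheme $(V_s)$, the same $\pi$, and the same extension argument in which, for each target level $k$, one chooses a clopen partition of $\NN^\NN$ (the paper formalizes these as antichains $\mathcal S_k\subseteq\NN^{<\NN}$) on whose pieces $\phi$ lands inside level-$k$ sets $V_t$, with the choices made coherent across levels. The only cosmetic slip is the phrase ``$s(t)$ of length $|t|$''---the source length $|t|$ and the target level must be decoupled, exactly as your own next paragraph does---and the openness of $\pi$, while true, plays no role in the argument.
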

\begin{proof} We show that the ``usual" construction of a continuous surjection from $\NN^\NN$ onto $X$ produces a map with the extension property.

Denoting by $\NN^{<\NN}$ the set of all finite sequences of integers, let $(V_t)_{t\in\NN^{<\NN}}$ be a family of non-empty open subsets of $X$ satisfying the following properties:

\begin{itemize}
\item $V_\emptyset=X$ and $\diam(V_t)<2^{-\vert s\vert}$ for all $t\neq\emptyset$;
\item $\overline{V}_{ti}\subseteq V_s$ for each $t$ and all $i\in\NN$;
\item $\bigcup_{i\in\NN} V_{ti}=V_t$ for all $t$.
\end{itemize}

Let $\pi:\NN^\NN\to X$ be the map defined by $\{\pi(\alpha)\}:=\bigcap_{k\geq 1} \overline V_{\alpha|_k}$. We show that this map $\pi$ has the extension property.

Let us fix a map $\phi:\NN^\NN\to X$. Since $\phi$ is continuous, one can find, for each $k\in\NN$, a family $\mathcal S_k\subseteq\NN^{<\NN}$ such that $\bigcup\{ [s];\; {s\in\mathcal S_k}\}=\NN^\NN$ and each set $\phi([s])$, $s\in\mathcal S_k$ is contained in some open set $V_{t(s)}$ with $\vert t(s)\vert=k$. Moreover, we may do this in such a way that the following properties hold true:
\begin{itemize}
\item if $k\geq 2$, every $s\in\mathcal S_k$ is an extension of some $\widetilde s\in \mathcal S_{k-1}$;
\item if $s\in\mathcal S_k$ is an extension of $\widetilde s\in\mathcal S_{\widetilde k}$,
 (with $\widetilde k\leq k$), then $t(s)$ is an extension of $t(\widetilde s)$;
 \item $\mathcal S_k$ is an antichain, \mbox{\it i.e.} no $s\in\mathcal S_k$ is a strict extension of an $s'\in\mathcal S_k$.
\end{itemize}

(To get the third property, just remove from $\mathcal S_k$ the unnecessary $s$, \mbox{\it i.e.} those extending some $s'\in\mathcal S_k$ with $s'\neq s$.)

Note that for each fixed $k\in\NN$, the family $\{ [s];\; s\in\mathcal S_k\}$ is actually a partition of $\NN^\NN$ because $\mathcal S_k$ is an antichain; hence, for any $\alpha\in\NN^\NN$, we may denote by $s_k(\alpha)$ the unique $s\in\mathcal S_k$ such that $s\subseteq \alpha$. Now, define $S:\NN^\NN\to\NN^\NN$ as follows: if $\alpha\in\NN^\NN$ then $S(\alpha)|_k:= t(s_k(\alpha))$ for all $k\in\NN$. It is readily checked that $S$ is continuous and $\phi=\pi S$.
\end{proof}

\smallskip
Let us now come back to compact metrizable dynamical systems. 
From Corollary \ref{BalcSim}, one can easily prove that there is a ``universal extension" for all dynamical systems $(X,T)\in\mathfrak D$. However, this map is defined on a huge compact \emph{nonmetrizable} space. In what follows, for any compact metric spaces $X$ and $Y$, we endow $\mathcal C(X,Y)$ with the topology of uniform convergence, which turns it into a Polish space.

\begin{prop}\label{cheating} There exists a compact $0$-dimensional space $\mathbf E$ and a map $U:\mathbf E\to \mathbf E$ such that every $(X,T)\in\mathfrak D$ is a factor of $(\mathbf E,U)$.
\end{prop}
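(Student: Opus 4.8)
The plan is to build $\mathbf E$ as a kind of ``universal bundle'' over the Polish space $\mathfrak D$ of all compact metrizable dynamical systems, using Corollary~\ref{BalcSim} fiberwise. More precisely, fix a concrete presentation of $\mathfrak D$: let $X$ range over the Hilbert cube $Q=[0,1]^\NN$, or rather over its compact subsets $K$, and let $T$ range over $\mathcal C(K,K)$. Thus the relevant parameter space is $P:=\{(K,T): K\in\hyp{Q},\ T\in\mathcal C(K,K)\}$, which is a Polish space (a $G_\delta$ in $\hyp{Q}\times\mathcal C(Q,Q)$ once one checks that ``$T(K)\subseteq K$'' is $G_\delta$). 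By Theorem~\ref{Weihr}, fix once and for all a map $\pi_0:\cantor\to Q$ with the extension property. For each $(K,T)\in P$, composing the inclusion with the pullback gives, via the extension property applied to $\phi:=T\circ(\pi_0\!\restriction\!\pi_0^{-1}(K))$, a self-map $S_{K,T}$ of $\cantor$ covering $(K,T)$; the point is that these can be chosen \emph{canonically and measurably}, indeed continuously, in the parameter $(K,T)$, because the construction in the proof of Theorem~\ref{Weihr} is explicit. Then I would set $\mathbf E:=\cantor\times\beta$, where $\beta$ is a suitable compactification that absorbs the parameter space $P$, and let $U$ act as $S_{K,T}$ on the fiber over a point coding $(K,T)$.

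The difficulty with that naive description is that $P$ is Polish but not compact, so $\cantor\times P$ is not compact. The fix is to pass to a compact $0$-dimensional space that surjects onto (a dense subset of) $P$: since $P$ is Polish and $0$-dimensional spaces of the form $\baire$ map onto it, I would instead work with a compactification. The cleanest route: let $Z$ be a compact metrizable $0$-dimensional space together with a continuous surjection $q:Z\to \overline P$ onto some compactification $\overline P$ of $P$ inside $\hyp{Q}\times\mathcal C(Q,Q)$ — but continuity of $(K,T)\mapsto S_{K,T}$ may fail on the boundary. So I would not compactify; instead I would follow the route already flagged by the authors (``this map is defined on a huge compact nonmetrizable space''): take $\mathbf E$ to be a Stone space. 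Concretely, enumerate a countable dense set $\{(K_n,T_n):n\in\NN\}$ in $P$ — here $0$-dimensionality of $P$ is not needed, just separability — and form the disjoint union $\bigsqcup_n (\cantor, S_{K_n,T_n})$, then take its Stone–Čech compactification $\mathbf E:=\beta\bigl(\bigsqcup_n \cantor\bigr)$. Since $\bigsqcup_n\cantor$ is locally compact, $0$-dimensional and metrizable, $\mathbf E$ is compact and $0$-dimensional. The map $\bigsqcup_n S_{K_n,T_n}$ extends uniquely to a continuous $U:\mathbf E\to\mathbf E$, and each factoring map $\pi_n:\cantor\to K_n$ extends to $\overline{\pi_n}:\mathbf E\to K_n$; but a single map on $\mathbf E$ won't factor \emph{all} $(K,T)\in\mathfrak D$ this way, only the countably many $(K_n,T_n)$.

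To recover \emph{all} $(X,T)\in\mathfrak D$, and not merely a dense set of them, I would instead argue semicontinuously. The cleaner construction: take $\mathbf E:=\prod_{(K,T)\in P}\cantor$ is nonmetrizable but still a product of $0$-dimensional compacta, hence compact $0$-dimensional; let $U:=\prod_{(K,T)} S_{K,T}$ act coordinatewise; and for a given $(X,T_X)\in\mathfrak D$, first identify $X$ with some $K\in\hyp Q$ and $T_X$ with the corresponding $T\in\mathcal C(K,K)$, so $(K,T)\in P$, then let $\pi:\mathbf E\to X$ be the projection onto the $(K,T)$-coordinate $\cantor$ followed by $\pi_{K,T}:\cantor\to K=X$. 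Since $U$ restricted to that coordinate is $S_{K,T}$ and $\pi_{K,T}S_{K,T}=T\pi_{K,T}$ by Corollary~\ref{BalcSim}, we get $\pi U=T\pi$ and $\pi$ is onto because $\pi_{K,T}$ is. This $\mathbf E=\cantor^{P}$ is compact, $0$-dimensional, and works; the only thing to verify is that a single $0$-dimensional compact $\mathbf E$ and a single $U$ suffice, which this product construction supplies directly (no measurability or continuity of $(K,T)\mapsto S_{K,T}$ is needed, since the factorizations are taken independently on each coordinate).

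\textbf{Main obstacle.} The real subtlety is organizational rather than deep: one must be careful that ``every compact metric dynamical system'' is captured by a \emph{set}-indexed family — which is handled by the standard observation that up to isomorphism every $(X,T)\in\mathfrak D$ occurs with $X\in\hyp{Q}$ — and that the product $\cantor^P$ genuinely carries a single self-map doing the job on every coordinate simultaneously. I expect the only place requiring care in writing is checking that $P$ (equivalently the indexing set) is a set and that for each member Corollary~\ref{BalcSim} applies verbatim; the topology on $\mathbf E$ being $0$-dimensional compact is then automatic as a product of copies of $\cantor$.
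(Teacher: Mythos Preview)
Your final construction (the third paragraph) is correct and is essentially the same product-space idea the paper uses; the first two paragraphs are detours you rightly abandon. The paper streamlines the argument by first observing, via Corollary~\ref{BalcSim}, that it suffices to extend every \emph{Cantor} system $(\cantor,S)$, and then taking $\mathbf E:=(\cantor)^{\cont{\cantor}{\cantor}}$ with $U(z)_S:=S(z_S)$ and factoring map the coordinate projection $\pi_S$. This is exactly your $\cantor^{P}$ with $U=\prod S_{K,T}$, except that by indexing over $\cont{\cantor}{\cantor}$ rather than over all $(K,T)\in P$ the paper avoids both the Hilbert-cube bookkeeping and the arbitrary choice of a lift $S_{K,T}$ for each system: the coordinate \emph{is} the self-map. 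The gain is purely cosmetic---no choice function, no parameter space---but it makes the two-line proof read as such.
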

\begin{proof} By Corollary \ref{BalcSim}, it is enough to show that there exists a $0$-dimensional compact dynamical system $(\mathbf E,U)$ which is an extension of every \emph{Cantor} dynamical system $(\cantor ,S)$.

\smallskip
Consider the space $\mathbf E:= (\cantor)^{\cont{\cantor}{\cantor}}$ endowed with the product topology. This is a $0$-dimensional compact space. For any 
$z=(z_S)_{S\in\mathcal C(\cantor,\cantor)} \in \mathbf E$, we define $U(z) \in
	\mathbf E$ as follows:
	$$U(z)_S=S(z_S)\qquad\hbox{for every $S\in\mathcal C(\cantor,\cantor)$}\, .$$ 
	
	Then, $U$ is continuous because each $z\mapsto U(z)_S$ is continuous. Moreover, each $S\in \cont{\cantor}{\cantor}$ is indeed a factor of  $U$ by the very definition of $\mathbf E$ and $U$: just consider the projection map $\pi_S:\mathbf E\to\cantor$ from $\mathbf E$ onto the $S$-coordinate.
\end{proof}

\begin{remark} If one just wants to lift those dynamical systems $(X,T)\in\mathfrak D$ which are \emph{transitive}, \mbox{\it i.e.} have dense orbits, then one can take $(\mathbf E,U):=(\beta\mathbb Z_+, \sigma)$, where $\beta\mathbb Z_+$ is the Stone-$\check{\rm C}$ech compactification of $\mathbb Z_+$ and $\sigma :\beta\mathbb Z_+\to\beta\mathbb Z_+$ is the unique continuous extension of the map $n\mapsto 1+n$. Indeed, given a point $x_0\in X$ with dense $T$-orbit, the map $n\mapsto T^n(x_0)$ can be extended to a continuous map $\pi_T :\beta\mathbb Z_+\to X$ showing that $(X,T)$ is a factor of $(\beta\mathbb Z_+,\sigma)$.
\end{remark}

\begin{remark}\label{biensur} If $(\mathbf E,U)$ is a compact dynamical system which is universal for $\mathfrak D$ in the sense of Proposition \ref{cheating}, then the space $\mathbf E$ cannot be metrizable.
\end{remark}
\begin{proof} For any $g\in\TT$, let us denote by $R_g:\TT\to\TT$ the associated rotation of $\TT$. We show that if $(\mathbf E, U)$ is a compact dynamical system that is ``only" a common extension of all rotations $R_g$, then $\mathbf E$ already cannot be metrizable. 

Let us fix a point $e\in\mathbf E$.
 Since $\bf E$ is assumed to be compact and metrizable, one can find an increasing sequence of integers $(i_k)$ such that the sequence $U^{i_k}(e)$ is convergent. Now, given $g\in\TT$, let $\pi_g:\mathbf E\to \TT$ be a map witnessing that $U$ is an extension of $R_g$. Setting $\xi_g:=\pi_g(e)$, we then have $g^{i_k}\xi_g=\pi_gU^{i_k}(e)$ for all $k$; and since $\pi_g$ is continuous, it follows that for every $g\in\TT$, the sequence $g^{i_k}$ is convergent. But it is well known that this is not possible. (One can prove this as follows: if $g^{i_k}\to \phi(g)$ pointwise, then $\int_\TT \phi(g)g^{-i_k}dg\to 1$ by dominated convergence, contradicting the Riemann-Lebesgue lemma.)
\end{proof}

\subsection{Lifting to the Baire space}\label{Bairelift} As observed above, if one wants to lift too many maps at the same time then one cannot do it with a compact metrizable space. On the other hand, our next result says that for ``small families" of maps, one can find a common extension defined on a \emph{Polish} space.%, namely the Baire space $\NN^\NN$.

\smallskip
In what follows, for any compact metric spaces $X$ and $Y$, we endow $\mathcal C(X,Y)$ with the topology of uniform convergence, which turns it into a Polish space.

\begin{thm}\label{Ksigma1} Let $X$ be a compact metric space. Given any $\sigma$-compact family $\mathcal F\subseteq \mathcal C(X,X)$, one can find a map $U_{\mathcal F}:\baire\to\baire$ such that every $T\in\mathcal F$ is a factor of $U_{\mathcal F}$.
\end{thm}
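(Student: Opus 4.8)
The plan is to mimic the structure of Proposition~\ref{cheating}: build a big ``master'' system by taking a suitable product indexed by $\mathcal F$ (or rather by a parametrizing space of $\mathcal F$), and then cut down to a Polish subsystem of the right topological type using the machinery of Section~\ref{subsystem}. First I would write the $\sigma$-compact family as $\mathcal F=\bigcup_{k}\mathcal K_k$ with each $\mathcal K_k\subseteq\mathcal C(X,X)$ compact, and I would think of the evaluation-type map $\Phi:\mathcal F\times X\to X$, $(T,x)\mapsto T(x)$, which is continuous on each $\mathcal K_k\times X$. The natural master system lives on $E:=\mathcal F\times X^{\NN}$ or, more symmetrically, on $E:=\bigsqcup_k\bigl(\mathcal K_k\times X^{\mathbb N}\bigr)$, with a self-map $S$ that acts on the $X^{\mathbb N}$-coordinate by shifting in a copy of $T$: concretely $S(T,(x_0,x_1,\dots)):=(T,(T(x_0),x_0,x_1,\dots))$, so that the first coordinate records the forward orbit under $T$. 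The projection $(T,\bar x)\mapsto x_0$ then intertwines $S$ with $T$ on the $T$-slice, which realizes each $T\in\mathcal F$ as a factor of $(E,S)$, but only after we restrict to the slice $\{T\}\times X^{\mathbb N}$ — and that is fine because a factor of a subsystem through a retraction-compatible projection is what we want.

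The key realization is that $E$ as above is \emph{not} compact (since $\mathcal F$ need only be $\sigma$-compact, not compact) but it \emph{is} Polish: each $\mathcal K_k\times X^{\mathbb N}$ is compact metric, and a countable disjoint union of compact metric spaces is Polish (indeed $\sigma$-compact Polish), and $S$ is continuous on it. Better, I can arrange $E$ to be zero-dimensional: replace $X$ by a Cantor extension using Theorem~\ref{Weihr} / Corollary~\ref{BalcSim}, so that lifting every $T\in\mathcal F$ reduces to lifting the corresponding Cantor systems $(\cantor,S_T)$ with the family $\{S_T\}$ still $\sigma$-compact in $\mathcal C(\cantor,\cantor)$ (continuity of $T\mapsto S_T$ on each compact piece must be checked, but this is where the extension property is used to choose the lifts coherently — this is the step I expect to be fussiest). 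Then $E:=\bigsqcup_k\bigl(\mathcal K_k'\times\cantor^{\mathbb N}\bigr)$ is a zero-dimensional $\sigma$-compact Polish space carrying a continuous self-map $S$ of which every $T\in\mathcal F$ is a factor.

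Finally I would apply the results of Section~\ref{subsystem} to pass from this Polish zero-dimensional $(E,S)$ to a system on $\baire$ itself. Here one must be slightly careful: Lemma~\ref{lem:baireinv} needs $X_0$ nowhere locally compact, but $E$ is $\sigma$-compact, hence locally compact nowhere — so instead I would first embed $E$ as a closed subset of $\baire$ (every zero-dimensional Polish space embeds in $\baire$; a $\sigma$-compact one embeds as a closed subset after enlarging suitably, or one embeds $\baire$ itself as a closed $S$-invariant-complement set). The cleanest route: view $E$ as a closed subspace of $\baire$ and extend $S$ to all of $\baire$ by using a retraction $r:\baire\to E$ (which exists since $E$ is a closed subset of the zero-dimensional $\baire$ and $\baire$ is an absolute retract for zero-dimensional spaces) and setting $U_{\mathcal F}:=S\circ r$, interpreting $S$ as a map into $E\subseteq\baire$. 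Then for each $T\in\mathcal F$ the composite of $r$ with the projection $E\to\cantor^{\mathbb N}\to\cantor\to X$ on the appropriate slice, restricted and corrected on the relevant invariant piece, witnesses that $T$ is a factor of $U_{\mathcal F}$. The main obstacle, I expect, is bookkeeping: ensuring that the coordinate projections genuinely intertwine $U_{\mathcal F}$ with $T$ on the full space $\baire$ and not merely on a subsystem, which is exactly why the ``orbit-recording'' shift construction (rather than a naive product of the maps themselves) is the right master system to start from, and why one wants $\baire$ rather than $\cantor$ as the domain — the non-compactness of $\baire$ is precisely what lets the retraction $r$ exist and lets the disjoint-union structure of $E$ sit inside it.
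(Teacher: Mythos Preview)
There is a genuine gap. Your master system on $E=\bigsqcup_k(\mathcal K_k\times X^{\mathbb N})$ (or on $\mathcal F\times X^{\mathbb N}$) does \emph{not} have each $T\in\mathcal F$ as a factor. For the projection $\pi_0(T',\bar x)=x_0$ one has $\pi_0 S(T',\bar x)=T'(x_0)$ while $T\pi_0(T',\bar x)=T(x_0)$, so the intertwining relation holds only on the slice $\{T\}\times X^{\mathbb N}$, not globally; each $T$ is merely a factor of an invariant \emph{subsystem} of $(E,S)$, which is not what is required. Concretely, take $X=\{0,1\}$ and let $\mathcal F$ contain both the identity and the flip $\tau$: then $(E,S)$ has a fixed point on the identity-slice, so $\tau$ cannot be a factor of $(E,S)$. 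The retraction trick does not repair this: with $U_{\mathcal F}=S\circ r$ for a retraction $r:\baire\to E$, the map $U_{\mathcal F}$ restricted to $E$ is just $S$ again, and any candidate factor map $\pi:\baire\to X$ built from $r$ and a coordinate projection meets the same obstruction off the $T$-slice. A disjoint-union-of-slices construction cannot yield simultaneous factor maps; what is needed is a \emph{product}-type construction in which coordinates indexed by the various maps evolve in parallel and each coordinate projection is a global factor map. You announce ``a suitable product indexed by $\mathcal F$'' but then build a fibred space $\mathcal F\times X^{\mathbb N}$ instead, and that switch is exactly where the argument breaks.

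This product is precisely what the paper constructs, and the idea you are missing is how to make it Polish. Proposition~\ref{cheating} uses the full product $(\cantor)^{\cont{\cantor}{\cantor}}$, which is non-metrizable. For a \emph{compact} parameter set $\mathcal N\subseteq\cont{\cantor}{\cantor}$, Lemma~\ref{lem:cantorfactor} replaces the naive product $\cantor^{\mathcal N}$ by the space of continuous sections $\mathbf Z=\cont{\mathcal N}{\cantor}$, which is homeomorphic to $\baire$; the map $U_{\mathcal N}(z)(S)=S(z(S))$ then has every evaluation $\pi_S(z)=z(S)$ as a genuine, globally defined factor map onto $(\cantor,S)$. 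Your instinct that a strong form of the extension property is needed to lift each compact $\mathcal K_k\subseteq\cont{X}{X}$ continuously to a compact family of Cantor systems is correct (this is Lemma~\ref{lem:strongext} and Corollary~\ref{cor:intermediate}), and Lemma~\ref{lem:okmap} handles the countable union; but the heart of the proof is the function-space substitute for the product, not a retraction from $\baire$ onto a $\sigma$-compact fibred space.
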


This  will follow from the next three lemmas, the first of which is a refinement of Theorem \ref{Weihr}.

\smallskip
Let us say that a map $\pi:E\to X$ between compact metric spaces has the {\em strong extension property} if for every given map $\Phi :\cantor \rightarrow \cont{E}{X}$,  there exists a map $F:\cantor\rightarrow \cont{X}{X}$ such that for all $p \in \cantor$ we have that $\Phi(p)=\pi F(p)$.

\begin{lem}\label{lem:strongext} Let $X$ be a compact metric space. Then, there is a map $\pi: \cantor \rightarrow X$ which has the strong extension property.
\end{lem}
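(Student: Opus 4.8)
The plan is to prove Lemma~\ref{lem:strongext} by upgrading the standard ``tree of clopen pieces'' construction that gives Theorem~\ref{Weihr}, paying attention to the fact that we now need to lift a whole \emph{continuum} of maps $\Phi(p)\colon E\to X$, parametrized continuously by $p\in\cantor$, rather than a single map. First I would set $E:=\cantor$ as well, so $\pi\colon\cantor\to X$; this is harmless since $\cantor$ maps onto $\cont{\cantor}{\cantor}$-style spaces, but more honestly one builds $\pi$ directly. Fix a compatible metric on $X$. The construction produces, for each $t\in 2^{<\nat}$, a non-empty clopen piece: on the $X$ side a closed set $X_t\subseteq X$ with $\diam(X_t)\to 0$ along branches, $X_\emptyset=X$, $X_{t0}\cup X_{t1}=X_t$ (overlaps allowed on the $X$ side), and on the $\cantor$ side the usual clopen cylinder $[t]$, with $\pi([t])\subseteq X_t$; then $\{\pi(\alpha)\}=\bigcap_k X_{\alpha|_k}$.

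The key new point is to carry along a second index for the parameter $p$. So I would actually build pieces indexed by pairs: clopen sets $C_s\subseteq\cantor$ (the parameter space) forming a refining clopen partition at each level $|s|=n$, together with the $X_t$ above, and — crucially — a combinatorial ``transition'' assignment recording, for each $s$ of length $n$ and each $t$ of length $n$, which pieces $X_{t'}$ (length $n$) the set $\Phi(p)(X_t)$ can land in, uniformly for $p\in C_s$. This is exactly the data one needs to define $F$: given $p$, the branch of $C_s$'s through $p$ together with the branch of $[t]$'s through a point $x$ determines a branch of $[t']$'s, hence a point $F(p)(x)$, and continuity in both $p$ and $x$ is forced by the diameter control. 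The surjectivity-of-$\pi$ and the identity $\Phi(p)=\pi F(p)$ then follow by intersecting nested closed sets, as in the proof of the Proposition on Polish spaces earlier in the paper. The main obstacle — and the place the construction actually has content — is the inductive step: given the level-$n$ data, one must, using only \emph{uniform continuity} of the map $(p,x)\mapsto\Phi(p)(x)$ on the compact set $\cantor\times X$, refine the parameter partition $\{C_s\}$ and the space pieces $\{X_t\}$ finely enough that for every length-$(n{+}1)$ pair $(s',t')$ the image $\Phi(p)(X_{t'})$ is contained in a \emph{single} level-$(n{+}1)$ piece $X_{t''}$ for all $p\in C_{s'}$, while keeping everything clopen/closed, non-empty, and properly refining the previous level. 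The refinement of $\{C_s\}$ and of $\{X_t\}$ have to be done simultaneously and compatibly, which is the only delicate bookkeeping; once it is set up, defining $F\colon\cantor\to\cont{X}{X}$ and checking $\Phi(p)=\pi F(p)$ is routine.

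Concretely, I would organize the induction so that at stage $n$ I have: a clopen partition $\cantor=\bigsqcup_{|s|=n}C_s$ with $\diam$ (in a fixed metric on the parameter copy of $\cantor$) less than $2^{-n}$ and $C_{s}=C_{s0}\sqcup C_{s1}$; non-empty closed sets $X_t$ ($|t|=n$) with $X_{t}=X_{t0}\cup X_{t1}$ and $\diam(X_t)<2^{-n}$; and, for each $(s,t)$, a nonempty set of admissible ``successors'' $\mathrm{adm}(s,t)\subseteq\{t'':|t''|=n\}$ such that $p\in C_s\Rightarrow \Phi(p)(X_t)\subseteq\bigcup_{t''\in\mathrm{adm}(s,t)}X_{t''}$, together with a \emph{choice function} $f_n$ picking one element of each $\mathrm{adm}(s,t)$ in a way coherent with $f_{n-1}$ under the refinement maps. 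Then $F(p)(x):=$ the unique point of $\bigcap_n X_{f_n(s_n(p),\,t_n(x))}$, where $s_n(p),t_n(x)$ are the stage-$n$ pieces containing $p$ and a chosen $\pi$-preimage-branch of $x$; continuity and the factorization identity drop out. I expect writing the stage-$n\to n{+}1$ refinement cleanly to be the bulk of the work, and I would lean on the compactness of $\cantor\times X$ (Lebesgue-number argument for the open cover of $\cantor\times X$ by sets of the form $C\times X_t$ with $\Phi(\,\cdot\,)(\,\cdot\,)$ of small oscillation) to produce the needed uniformity.
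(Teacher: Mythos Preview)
Your overall strategy is the paper's: build $\pi$ from a tree of closed pieces of $X$, then use uniform continuity of $(p,\alpha)\mapsto\Phi(p)(\alpha)$ to assign to each pair (parameter piece, domain piece) a target piece, and read off $F$. The gap is in how you synchronize levels. You require that for $|s'|=|t'|=n{+}1$ the image $\Phi(p)([t'])$ land in a \emph{single} level-$(n{+}1)$ piece $X_{t''}$ for every $p\in C_{s'}$. If the tree $(X_t)$ is fixed in advance---as it must be, since the lemma asks for one $\pi$ working for \emph{all} $\Phi$---there is no reason the oscillation of $\Phi(p)$ on a level-$(n{+}1)$ domain cylinder should fall below the Lebesgue number of the level-$(n{+}1)$ cover on the range side; that depends on the modulus of $\Phi$, which you do not control. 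Your phrasing (``refine \dots\ the space pieces $\{X_t\}$ finely enough'') suggests instead choosing the $X_t$ after seeing $\Phi$, but then $\pi$ depends on $\Phi$ and the statement is not proved.

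The repair, which is exactly what the paper does, is to decouple the two depths. Build $\pi$ once and for all, using branching sets $J_i$ of variable finite size rather than binary (with binary splitting one cannot force $\diam(X_t)<2^{-|t|}$ for a general compact $X$). Then, given $\Phi$: choose $m_k$ via equicontinuity of the compact family $\Phi(\cantor)$ so that every domain cylinder at depth $m_k$ has $\Phi(p)$-image of diameter less than the Lebesgue number of the level-$k$ cover, uniformly in $p$; and choose $l_k$ via uniform continuity of $\Phi$ so that $\Phi$ varies little on parameter cylinders of length $l_k$. Define $F(p)(\alpha)|_k:=t(p|_{l_k},\alpha|_{m_k})$. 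Here $m_k$ and $l_k$ depend on $\Phi$, but the tree and $\pi$ do not. (Also keep the domains straight: $\Phi(p)\in\cont{\cantor}{X}$, so its argument is a cylinder in $\cantor$, not a set $X_t\subseteq X$, and $F(p)$ must be a self-map of $\cantor$, not of $X$.)
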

\begin{proof} We follow the proof of Theorem \ref{Weihr} given in \cite[p. 110]{kurka}. The construction of the map $\pi$ is the same as in that proof. We only need to verify that $\pi$ has the additional stronger property.
	
	We recall the construction of $\pi$. As in \cite{kurka}, one can easily construct inductively a sequence $\{J_i\}$ of finite sets of cardinality at least $2$
	and collections $\{V_{s}\}$ and $\{W_{s}\}$ of nonempty open subsets of $X$ such that the following conditions hold for all $k \ge 1$ and for all $s \in
	\prod _{i=1}^k J_i$:
	\begin{itemize}
		\item the diameters of $V_{s}$ and $W_{s}$ are less than $2^{-k}$;
		\item $\left\{V_{t};\; t \in \prod _{i=1}^k J_i\right\}$ is a cover of $X$;
		\item $\left\{W_{sj};\; j\in J_{k+1} \right\}$ is a cover of ${\overline V}_{s}$;
		\item $V_{sn}= V_{s} \cap W_{sn}$.
	\end{itemize}

 We let $\Lambda: =\prod_{i=1}^{\infty} J_i$, which is homeomorphic to $\cantor$. For $\alpha \in \Lambda$, we define $\pi(\alpha)$ in the obvious way: $\{ \pi(\alpha)\}= \cap _{k=1}^{\infty} {\overline V} _{\alpha |_k}$. Then, $\pi$ is a well-defined map from $\Lambda$ onto $X$. %Moreover, $\Lambda$ is homeomorphic to $\cantor$. 
	
	Now let $\Phi: \cantor \rightarrow \cont{\Lambda}{X}$ be a map. With each $p \in \cantor$ and $\Phi(p)$ we will associate a map $F(p): \Lambda \rightarrow \Lambda$ as in \cite{kurka}. However, we must do this in a continuous fashion. 
	
		First, for each $k \ge 1$, we let $2^{-k}> \varepsilon_k >0$ be sufficiently small so that $\varepsilon_k$ is a Lebesgue number for the covering  $\{W_{s j};\; j\in J_{k+1}\}$ of ${\overline V}_{s}$ for all $s \in
	\prod _{i=1}^k J_i$. 
	
	For any fixed $p \in \cantor$,  the uniform continuity of $\Phi(p)$ allows us to choose an increasing sequence  of positive integers $\{m_k\}$ such that for each $k \ge 1$ and $ s \in \prod_{i=1}^{m_k} J_i$, the diameter of the set $\Phi(p)([s])$ is less than $\varepsilon_k/4$. (Here $[s]$ denotes those elements of $\Lambda$ which begin with $s$.)
	Moreover, as the map $\Phi$ is continuous, $\Phi(\cantor)$ is a uniformly equicontinuous subset of $\cont{\Lambda}{X}$; so we may in fact choose $\{m_k\}$ independent of $p$. Hence, we have an increasing sequence of positive integers $\{m_k\}$ such that \emph{for all $p \in \cantor$} and all $k \ge 1$ and $ s \in \prod_{i=1}^{m_k} J_i$, the diameter of the set $\Phi(p)([s])$ is less than $\varepsilon_k/4$. 
	
	By the uniform continuity of $\Phi$ we can choose an increasing sequence of integers $\{l_k\}$ such that if $p, p' \in \cantor$ satisfy $p|_{l_k} = p'|_{l_k}$, then $d(\Phi(p),\Phi(p'))< \varepsilon_k/4$. (Here, $d$ is the ``sup" metric on $\cont{\Lambda}{X}$.) Note that this implies in particular that for all $q\in\{ 0,1\}^{l_k}$ and $ s \in \prod_{i=1}^{m_k} J_i$, the diameter of the set %$\left (\Phi(p_1)([s]) \cup \Phi(p_2)([s])\right)$ 
	$E_{s,q}:=\bigl\{ \Phi(p)([s]);\; p|_{l_k}=q\bigr\} $ is less than $\varepsilon_k$. This, in turn, implies (by the choice of $\varepsilon_k$) that $E_{s,q}$ is contained in an open set $V_t$ for some $t  \in \prod_{i=1}^{k} J_i$; and, moreover, that if $E_{s,q}$ was already known to be contained in $V_{\widetilde t}$ for some $\widetilde t\in \prod_{i=1}^{k-1} J_i$, then $t$ may be chosen to be an extension of $\widetilde t$. Let us summarize what we have: 
	
	\begin{quote}	(i) 
	For all $ k \ge 1$,  for all $q \in \{0,1\}^{l_k}$ and  for all $s \in \prod_{i=1}^{m_k} J_i$, there exists 
	$t = t(q, s) \in \prod_{i=1}^{k} J_i$ such that for all $p \in \cantor$ extending $q$, we have that $\Phi(p) ([s]) \subset V_{t}$.
	\end{quote}
	
	% Moreover, the above can be done recursively in such a way that
	\begin{quote}\label{quote:ext} (ii) Everything can be done recursively in such a way that
	if  $k' \ge k$, $q'\in \{0,1\}^{l_{k'}}$ is an extension of $q$ and $s' \in \prod_{i=1}^{m_{k'}} J_i$ is an extension of $s$, then $t' = t'(q', s')$ is an extension of $t$.
	\end{quote}
	
	Let us now define the desired $F:\cantor \rightarrow \cont{\Lambda}{\Lambda}$. Fix $p \in \cantor$. Then, $F(p): \Lambda \rightarrow \Lambda$ is defined as follows. For each $\alpha \in \Lambda$, we define $F(p)(\alpha)$ so that for all $k \ge 1$, $F(p)(\alpha)|_k = t(p|_{l_k}, \alpha|_{m_k})$. By the above, $F(p)$ is a well-defined function, and it is continuous. Moreover, $F$ itself is continuous: if $p,p' \in \cantor$ with $p|_{l_k} = p'|_{l_k}$, then $d(F(p),F(p'))< \varepsilon_k < 2^{-k}$. That $\Phi(p) = \pi F(p)$ for all $p\in\cantor$ follows in the same manner as in \cite{kurka}.
	\end{proof}
\begin{cor}\label{cor:intermediate}
	Let $X$ be a compact metric space and let $\mathcal M$ be a compact subset of $ \cont{X}{X}$. Then, there is a compact set $\mathcal N \subseteq \cont{\cantor}{\cantor}$ such that every $T\in\mathcal M$ is a factor of some $S\in\mathcal N$. 
\end{cor}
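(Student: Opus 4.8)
The plan is to use $\pi$ itself as the factoring map for every $T\in\mathcal M$ simultaneously, and to read off $\mathcal N$ from Lemma~\ref{lem:strongext}. Note first that $(X,T)$ is a factor of $(\cantor,S)$ with witness $\pi:\cantor\to X$ precisely when $T\pi=\pi S$. So what we want is: a single compact $\mathcal N\subseteq\cont{\cantor}{\cantor}$ such that for each $T\in\mathcal M$ there is $S\in\mathcal N$ with $T\pi=\pi S$.

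First I fix a map $\pi:\cantor\to X$ with the strong extension property, as provided by Lemma~\ref{lem:strongext}; by construction $\pi$ is onto. Since $\mathcal M$ is a compact metric space, it is a continuous image of the Cantor set, so I fix a continuous surjection $q:\cantor\twoheadrightarrow\mathcal M$. Then I define $\Phi:\cantor\to\cont{\cantor}{X}$ by $\Phi(p):=q(p)\circ\pi$. This $\Phi$ is continuous, because $\pi$ is fixed and hence the sup-distance between $\Phi(p)$ and $\Phi(p')$ is at most the sup-distance between $q(p)$ and $q(p')$, while $q$ is continuous.

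Next I apply the strong extension property of $\pi$ to the map $\Phi$ (here the space ``$E$'' in the definition is $\cantor$): this yields a map $F:\cantor\to\cont{\cantor}{\cantor}$ such that $q(p)\circ\pi=\pi\circ F(p)$ for every $p\in\cantor$. I then set $\mathcal N:=F(\cantor)$. As the continuous image of the compact space $\cantor$, the set $\mathcal N$ is a compact subset of $\cont{\cantor}{\cantor}$.

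Finally I verify the conclusion. Given $T\in\mathcal M$, choose $p\in\cantor$ with $q(p)=T$ and put $S:=F(p)\in\mathcal N$; then $T\pi=q(p)\circ\pi=\pi\circ F(p)=\pi S$, so the onto map $\pi$ witnesses that $(X,T)$ is a factor of $(\cantor,S)$. I do not expect any genuine obstacle here: the whole weight of the argument is carried by Lemma~\ref{lem:strongext}, and the only point that needs a little care is that the strong extension property is stated for families indexed by $\cantor$, which is exactly why one routes the family $\mathcal M$ through the surjection $q:\cantor\twoheadrightarrow\mathcal M$ rather than parametrizing directly by $\mathcal M$.
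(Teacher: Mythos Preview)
Your proof is correct and follows exactly the same approach as the paper: parametrize $\mathcal M$ by a continuous surjection from $\cantor$, compose with $\pi$ to get $\Phi$, apply the strong extension property to obtain $F$, and take $\mathcal N=F(\cantor)$. The only difference is notational (your $q$ is the paper's $G$).
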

\begin{proof} Since $\mathcal M$ is compact and metrizable, there is a map $G:\cantor \to\cont{X}{X}$ such that $\mathcal M=G(\cantor)$. Now, choose a map $\pi: \cantor \rightarrow X$ having the strong extension property, and consider the map $\Phi:\cantor \rightarrow \cont{\cantor}{X}$ defined by $\Phi(p)=G(p) \pi$. Let $F:\cantor \rightarrow \cont{\cantor}{\cantor}$ be  a map such that $\Phi(p) =\pi F(p)$ for all $p \in \cantor$. Then $G(p)\pi= \Phi(p) = \pi F(p)$, so that $G(p)$ is a factor of $F(p)$ for every $p\in \cantor$ (because $\pi$ is onto). Hence, we may take $\mathcal N:=F(\cantor)$. 
	\end{proof}
	
	\begin{lem}\label{lem:cantorfactor} Let ${\mathcal N} $ be a compact subset of $ \cont{\cantor}{\cantor}$. Then, there exists map $U_{\mathcal N}: \baire \rightarrow \baire$ such that every $S\in\mathcal N$ is factor of $U_{\mathcal N}$.
	\end{lem}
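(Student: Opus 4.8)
The plan is to realize $\baire$ as the space of branches of a suitable tree whose levels are indexed by the compact parameter set $\mathcal N$ together with "coordinate data" for $\cantor$, and to let $U_{\mathcal N}$ act by applying the relevant map on each branch. Concretely, since $\mathcal N\subseteq\cont{\cantor}{\cantor}$ is compact metrizable, fix a continuous surjection $H:\cantor\to\mathcal N$, so that each $S\in\mathcal N$ equals $H(p)$ for some $p\in\cantor$. The idea is to build a space $E$ whose points encode a pair $(p,x)$ with $p\in\cantor$ and $x\in\cantor$, let $U_{\mathcal N}$ send $(p,x)$ to $(p, H(p)(x))$, and then arrange that $E$ is homeomorphic to $\baire$ while each projection $(p,x)\mapsto x$ restricted to the fiber over a fixed $p$ witnesses that $H(p)$ is a factor.

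**The difficulty** is that the naive choice $E=\cantor\times\cantor$ is compact, not homeomorphic to $\baire$, and simply deleting points to make it Polish and nowhere locally compact need not keep it invariant under $U_{\mathcal N}$. So the first real step is to fatten the $\cantor$ factors into copies of $\baire$ in a $U_{\mathcal N}$-compatible way. Here I would invoke the earlier machinery of Section \ref{subsystem}: consider $Z:=\cantor\times\cantor$ with the map $V(p,x):=(p,H(p)(x))$, note $V$ is continuous, and observe that $Z$ is $0$-dimensional. One cannot directly apply Lemma \ref{lem:baireinv} because $Z$ is compact; instead I would first embed the picture into a non-locally-compact ambient space. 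The cleanest route: let $\widehat E:=\cantor\times\baire$, fix a continuous surjection $\rho:\baire\to\cantor$ (e.g. the restriction of a retraction, or just any continuous surjection), and define $\widehat U(p,\alpha)$ to be some continuous lift of $(p,H(p)(\rho(\alpha)))$ through $\mathrm{id}\times\rho$ — this lift exists because $\baire\to\cantor$ has the extension property in the sense of Proposition~\ref{Weihr}'s Baire analogue (any continuous map into $\cantor$ lifts through $\rho$). Then $\widehat E\cong\cantor\times\baire$ is Polish, $0$-dimensional, nowhere locally compact (because the $\baire$ factor is), so by Lemma \ref{lem:baireinv} applied to $(\widehat E,\widehat U)$ — after first passing to the dense $G_\delta$ invariant subset — we get $Y\subseteq\widehat E$ homeomorphic to $\baire$ with $\widehat U(Y)\subseteq Y$.

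**Next**, set $U_{\mathcal N}:=\widehat U\restriction Y$ acting on $Y\cong\baire$. It remains to check that each $S=H(p_0)\in\mathcal N$ is a factor of $U_{\mathcal N}$. The candidate factoring map is $\pi_{p_0}:Y\to\cantor$, $\pi_{p_0}:=\mathrm{pr}_{\cantor}\circ\iota$ where $\iota:Y\hookrightarrow\widehat E$ — i.e. $\pi_{p_0}(y)=$ first coordinate... no: I want $\pi_{p_0}$ to land in the copy of $\cantor$ on which $S$ acts. The point is that for $y=(p_0,\alpha)\in Y$ lying in the fiber over $p_0$, we have $\widehat U(y)=(p_0,\beta)$ with $\rho(\beta)=H(p_0)(\rho(\alpha))=S(\rho(\alpha))$; hence $\mathrm{pr}_{\cantor}'\circ\widehat U = S\circ\mathrm{pr}_{\cantor}'$ on that fiber, where $\mathrm{pr}_{\cantor}'(p,\alpha):=\rho(\alpha)$. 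The one thing requiring care is \emph{surjectivity} of this map: I need the fiber of $Y$ over $p_0$ to project onto all of $\cantor$ under $\rho\circ\mathrm{pr}_{\baire}$. This is where I would be careful in choosing $Y$: when applying Lemma~\ref{lem:baireinv}, prescribe the countable dense set $C$ to include, for each $p_0$ in a fixed countable dense subset $\{p_0^{(k)}\}$ of $\cantor$, a countable $\rho$-dense-in-fiber set; combined with invariance and density this should force $\pi_{p_0}(Y)=\cantor$ for a dense set of $p_0$, and then a limiting/continuity argument handles the remaining $p_0$.

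**The main obstacle** I anticipate is exactly this surjectivity-of-the-factoring-map issue: ensuring that cutting $\widehat E$ down to a $\baire$-copy $Y$ does not destroy the property that every relevant fiber still surjects onto the target $\cantor$. If the limiting argument for general $p_0$ turns out to be delicate, the fallback is to forgo literal fibers and instead build $Y$ more explicitly as a tree of branches — take $E$ to be the set of $\alpha\in\baire$ coding a pair (a branch of $H^{-1}$-data, a branch of $\cantor$) with a genericity condition built into the coding so that each level has infinitely many extensions, making $E\cong\baire$ by the standard characterization (Polish, $0$-dimensional, nowhere locally compact), and define $U_{\mathcal N}$ directly on codes; surjectivity of each $\pi_{p_0}$ is then transparent from the coding. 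I expect one of these two implementations to go through cleanly, with the tree-of-branches version being the safer bet for getting surjectivity for free.
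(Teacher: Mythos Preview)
Your proposal has a genuine gap in the factoring step. You build $\widehat U$ on $\widehat E=\cantor\times\baire$ so that $\widehat U(p,\alpha)$ lies over $(p,H(p)(\rho(\alpha)))$, restrict to an invariant $\baire$-copy $Y$, and then propose $\pi_{p_0}(p,\alpha):=\rho(\alpha)$ as the factoring map for $S=H(p_0)$. But, as you yourself note, the intertwining relation $S\circ\pi_{p_0}=\pi_{p_0}\circ\widehat U$ holds only on the fiber over $p_0$: at a point $(p,\alpha)$ with $H(p)\neq S$ one gets $H(p)(\rho(\alpha))$ in place of $S(\rho(\alpha))$. A factoring map witnessing that $S$ is a factor of $U_{\mathcal N}:Y\to Y$ must be defined on \emph{all} of $Y$ and intertwine everywhere, not just on one fiber. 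Restricting attention to the fiber $Y_{p_0}$ does not help either, since then the restricted map $U_{\mathcal N}|_{Y_{p_0}}$ depends on $p_0$ and you no longer have a single common extension. Your tree-coding fallback inherits the same structural defect: a point of your space records one parameter $p$ together with one $x\in\cantor$, so any coordinate-style projection can only intertwine with the single map $H(p)$ attached to that point.

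The paper's proof avoids this by indexing by $\mathcal N$ itself rather than by a single parameter value. After enlarging $\mathcal N$ so that $\mathcal N\cong\cantor$, it sets $\mathbf Z:=\cont{\mathcal N}{\cantor}$ and defines $U_{\mathcal N}(z)(S):=S(z(S))$. Then for each fixed $S\in\mathcal N$ the evaluation $\pi_S(z):=z(S)$ is defined on all of $\mathbf Z$ and satisfies $\pi_S U_{\mathcal N}=S\pi_S$ everywhere; it is onto because constant functions lie in $\mathbf Z$. Finally $\mathbf Z$ is Polish, $0$-dimensional and nowhere locally compact (just as $\cont{\cantor}{\cantor}$ is), hence $\mathbf Z\cong\baire$. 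The moral is that to get one map factoring simultaneously onto every $S\in\mathcal N$, a point of the extension must carry a $\cantor$-coordinate for \emph{each} $S$, not merely a choice of one $S$ together with one coordinate.
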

	\begin{proof} Recall first that the Polish space $\cont{\cantor}{\cantor}$ is homeomorphic to $\baire$. Indeed, it is easy to check that $\cont{\cantor}{\cantor}$ is $0$-dimensional (because $\cantor$ is), and it is also easy to convince oneself that every equicontinuous family of self-maps of $\cantor$ has empty interior in $\cont{\cantor}{\cantor}$, so that $\cont{\cantor}{\cantor}$ is nowhere locally compact.
	
	As ${\mathcal N}$ is a compact subset of $\cont{\cantor}{\cantor}\simeq\baire$, we may enlarge it if necessary  and assume that ${\mathcal N}$ is homeomorphic to $\cantor$. Then $\mathbf Z:= \cont{{\mathcal N}}{\cantor}$ is homeomorphic to $\baire$. Now we define $U_{\mathcal N}: \mathbf Z \rightarrow \mathbf Z$ as in the proof of Proposition \ref{cheating} (changing the notation slightly):  if $z \in \mathbf Z$ then $U_{\mathcal N}(z)(S):=S(z(S))$ for every $S \in {\mathcal N}$. Then $U_{\mathcal N}$ is easily seen to be continuous (recall that $\mathbf Z=\cont{\mathcal N}{\cantor}$ is endowed with the topology of uniform convergence), and the evaluation map $\pi_S$ at $S$ shows that each $S\in\mathcal N$ is a factor of $U_{\mathcal N}$. (To see that $\pi_S:\mathbf Z\to \cantor$ is onto, consider constant maps $z\in\mathbf Z$.)
	\end{proof}

\smallskip
\begin{lem}\label{lem:okmap} Let $\mathbf Z :=\cantor$ or $\baire$, and let $U_1, U_2, \ldots$ be maps from $
\mathbf Z$ to $\mathbf Z$. Then, there is a map $U: \mathbf Z  \rightarrow \mathbf Z$ such that each $U_n$ is a factor of $U$.
\end{lem}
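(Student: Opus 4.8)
The plan is to realize $U$ on the countable power $\mathbf Z^{\NN}$ and then transport it to $\mathbf Z$ along a homeomorphism. The only fact I need beyond the definitions is the elementary self-similarity $\mathbf Z\cong\mathbf Z^{\NN}$: indeed $\cantor^{\NN}=(2^{\NN})^{\NN}\cong 2^{\NN\times\NN}\cong 2^{\NN}=\cantor$, and likewise $\baire^{\NN}=(\NN^{\NN})^{\NN}\cong\NN^{\NN\times\NN}\cong\NN^{\NN}=\baire$, using any bijection $\NN\times\NN\to\NN$. So it suffices to produce a self-map $U$ of $\mathbf W:=\mathbf Z^{\NN}$ of which each $U_n$ is a factor, and then carry it over to $\mathbf Z$.

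First I would define $U:\mathbf W\to\mathbf W$ coordinatewise by $U\bigl((x_n)_{n\in\NN}\bigr):=\bigl(U_n(x_n)\bigr)_{n\in\NN}$. Since each $U_n$ is continuous and $\mathbf W$ carries the product topology, $U$ is continuous. Next, for each $n\in\NN$ let $\pi_n:\mathbf W\to\mathbf Z$ be the $n$-th coordinate projection. Then $\pi_n$ is a continuous surjection (coordinate projections of a product of nonempty spaces are onto), and for every $x=(x_m)_{m\in\NN}\in\mathbf W$ one has $\pi_n(U(x))=U_n(x_n)=U_n(\pi_n(x))$, i.e. $\pi_n U=U_n\pi_n$. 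Hence $U_n$ is a factor of $U$ for every $n$.

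Finally I would fix a homeomorphism $h:\mathbf Z\to\mathbf W$ and replace $U$ by $h^{-1}Uh:\mathbf Z\to\mathbf Z$; the maps $\pi_n h$ then witness that each $U_n$ is a factor of this new $U$. I do not expect any genuine obstacle: the content is entirely in the observation that $\mathbf Z$ is homeomorphic to $\mathbf Z^{\NN}$, after which the coordinatewise product construction and the coordinate projections do the rest.
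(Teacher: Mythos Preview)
Your proposal is correct and follows essentially the same approach as the paper: define $U$ coordinatewise on $\mathbf Z^{\NN}$, use the coordinate projections as factoring maps, and invoke the homeomorphism $\mathbf Z\cong\mathbf Z^{\NN}$. The paper's proof is slightly terser (it simply ``identifies'' $\mathbf Z^{\NN}$ with $\mathbf Z$ rather than spelling out the conjugation by $h$), but the content is identical.
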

\begin{proof} The only property needed on the space $\mathbf Z$ is that $\mathbf Z$ is homeomorphic to $\mathbf Z^\NN$.
		 Define $U: \mathbf Z^\NN \rightarrow \mathbf Z^\NN$ by $U(z_1,z_2, \ldots): = (U_1(z_1),U_2(z_2), \ldots)$. Then, $T$ is continuous. Moreover, if $\pi_n:\mathbf Z^\NN \rightarrow \mathbf Z$ is the projection of  $\mathbf Z^\NN$ onto the $n$-th coordinate, we have that 
	$\pi_nU= U_n\pi_n$ for all $n\geq 1$. Identifying $\mathbf Z^\NN$ with $\mathbf Z$, this shows that $U$ has the required property.	
\end{proof}

\begin{proof}[Proof of Theorem \ref{Ksigma1}] Let $\mathcal F$ be a $\sigma$-compact of $\cont{X}{X}$, and write ${\mathcal F} = \bigcup_n{\mathcal M}_n$ where ${\mathcal M}_n$ are compact sets. By Corollary~\ref{cor:intermediate}, we may choose compact sets ${\mathcal N}_n \subseteq \cont{\cantor}{\cantor}$ such that every $T\in{\mathcal M}_n$ is a factor of some $S\in{\mathcal N}_n$. By Lemma~\ref{lem:cantorfactor}, we may choose for each $n$ a map $U_n: \baire \rightarrow \baire$ such that every $S\in {\mathcal N}_n$ is a factor of $U_n$. Now, by Lemma~\ref{lem:okmap}, there is a map $U: \baire \rightarrow \baire$ such that each map $U_n$ is factor of $U$. Since factors of factors are again factors, we may thus take $U_{\mathcal F}:=U$.
\end{proof}

\subsection{Lifting to the Cantor space}\label{nonsense}
{{The appearance of the Baire space $\NN^\NN$ in Theorem \ref{Ksigma1} may look rather surprising since we are dealing with self-maps of \emph{compact} metric spaces. This suggests the following question.
\begin{question}\label{faitchier} Let $X$ be a compact metric space. For which families $\mathcal F\subseteq\cont{X}{X}$ is it possible to find a common extension $U$ defined on the Cantor space $\cantor$?
\end{question}
}}

Let us denote by $\hbox{\textgoth I}$ the family of all subsets $\mathcal F$ of $\cont{X}{X}$ having the above property. It follows from Lemma \ref{lem:okmap} that $\hbox{\textgoth I}$ is closed under countable unions; and clearly, $\hbox{\textgoth I}$ is hereditary for inclusion. Hence, $\hbox{\textgoth I}$ is a \emph{$\sigma$-ideal} of subsets of $\cont{X}{X}$. Since $\sigma$-ideals of sets are quite classical objects of study, it might be interesting to investigate the structural properties of this particular example.

\smallskip
Having said that, we are quite far from being able to answer Question \ref{faitchier} in a satisfactory way. However, in this section we give a rather special sufficient condition for belonging to $\hbox{\textgoth I}$.

\smallskip
 In what follows, we fix a compact metric space $(X,d)$, and we also denote by $d$ the associated ``sup" metric on $\cont{X}{X}$.

\smallskip
For each $i\in\mathbb Z_+$, let us denote by $\mathtt e_i:\cont{X}{X}\to \cont{X}{X}$ the map defined by
$$\mathtt e_i (S):= S^{i}=\underbrace{S\circ\cdots \circ S}_{i\, times}\, .$$
We shall say that a family $\mathcal N\subseteq \cont{X}{X}$ has \emph{uniformly controlled powers} if the sequence $\{ \mathtt e_i|_\mathcal N;\; i\geq 0\}\subseteq\cont{\mathcal N}{\cont{X}{X}}$ is pointwise equicontinuous; in other words, if whenever a sequence $(S_k)\subseteq\mathcal N$ converges to some $S\in\mathcal N$, it holds that
\begin{equation}\label{truc}
S^i_k(x)\xrightarrow{k\to\infty} S^i(x)\qquad\hbox{uniformly with respect to $x$ and $i$}. 
\end{equation}

\begin{ex} Obviously, every finite family $\mathcal N\subseteq\cont{X}{X}$ has uniformly controlled powers.
\end{ex}

\begin{ex}\label{exjoke} Let $\mathcal N$ be a compact subset of $\cont{X}{X}$, and assume the each map $S\in\mathcal N$ has the following properties: $S$ is $1$-Lipschitz with a unique fixed point $\alpha(S)$ in $X$, and $S^i(x)\to \alpha(S)$ for every $x\in X$. (This holds for example if $d(S(x),S(y))<d(x,y)$ whenever $x\neq y$.) Then $\mathcal N$ has uniformly controlled powers.
\end{ex}
\begin{proof} Observe first that  the map $S\mapsto \alpha(S)$ is continuous on $\mathcal N$ (because it has a closed graph and acts between compact spaces). Moreover, $S^i(x)\to\alpha(S)$ \emph{uniformly} with respect to $S$ and $x$ as $i\to\infty$. (This follows from Dini's theorem applied to the continuous functions $\Phi_i:\mathcal N\times X\to\mathbb R$ defined by $\Phi_i(S,x):=d(S^i(x),\alpha(S))$. The sequence $(\Phi_i)$ is nonincreasing because each $S\in \mathcal N$ is $1$-Lipschitz.) So there is a sequence $(\varepsilon_i)$ tending to $0$ such that 
$$d(S^i(x),\alpha(S))\leq\varepsilon_i\qquad\hbox{for all $x$, $i$ and $S \in \mathcal N$}.$$

%Since $\mathcal N$ is compact, it is enough to show that the sequence $\{\mathfrak p_i\}$ is pointwise equicontinuous on $\mathcal N$. In other words, we have to check that if $(S_k)$ is a sequence in $\mathcal N$ converging uniformly to some map $S_\infty$, then
Now, let $(S_k)$ be a sequence in $\mathcal N$ converging (uniformly) to some map $S$. Set $\alpha_k:=\alpha(S_k)$ and $\alpha:=\alpha(S)$. For every $x$ and $i$, we have on the one hand
$$d(S_k^i(x),S^i(x))\leq d( S_k^i,S^i),$$
 and on the other hand
\begin{eqnarray*}
d(S_k^i(x),S^i(x))&\leq&d(S_k^i(x),\alpha_k)+d(\alpha_k,\alpha)+d(\alpha,S^i(x))\\
&\leq&2\varepsilon_i +d(\alpha_k,\alpha)\, .
\end{eqnarray*}

Since $\alpha_k\to\alpha$  and $S_k^i\to S^i$ for each fixed $i\in\mathbb Z_+$, this gives (\ref{truc}).
\end{proof}

\begin{prop}\label{propjoke} Let $\mathcal F\subseteq\cont{X}{X}$. Assume that $\mathcal F$ can be covered by countably many compact sets $\mathcal N$ having uniformly controlled powers. Then $\mathcal F$ admits a common extension $U_{\mathcal F}$ defined on $\cantor$.
\end{prop}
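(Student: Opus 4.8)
\textbf{Proof proposal for Proposition \ref{propjoke}.}

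The plan is to reduce, as in the proof of Theorem \ref{Ksigma1}, to the case of a single compact family $\mathcal N$ with uniformly controlled powers, and then to show that such an $\mathcal N$ admits a common extension defined on $\cantor$ rather than merely on $\baire$. Granting this, the general case follows by writing $\mathcal F=\bigcup_n\mathcal N_n$, obtaining for each $n$ a common extension $U_n:\cantor\to\cantor$ of $\mathcal N_n$, and invoking Lemma \ref{lem:okmap} (for $\mathbf Z=\cantor$) together with the fact that factors of factors are factors. So the heart of the matter is: \emph{if $\mathcal N\subseteq\cont XX$ is compact with uniformly controlled powers, then there is a map $U_{\mathcal N}:\cantor\to\cantor$ of which every $S\in\mathcal N$ is a factor.}

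To prove this, first apply Corollary \ref{cor:intermediate} to replace $\mathcal N$ by a compact $\mathcal N'\subseteq\cont\cantor\cantor$ such that every $S\in\mathcal N$ is a factor of some $S'\in\mathcal N'$; I would check that the construction (via the strong extension property) can be arranged so that $\mathcal N'$ still has uniformly controlled powers — this is plausible because the factoring maps $F(p)$ produced in Lemma \ref{lem:strongext} depend continuously and ``uniformly'' on $p$, so the powers of $F(p)$ are controlled by the powers of $G(p)$. Thus we may assume $\mathcal N\subseteq\cont\cantor\cantor$ from the outset, and enlarge it to be homeomorphic to $\cantor$. Now mimic the proof of Lemma \ref{lem:cantorfactor}, but using the \emph{orbit space} rather than the full function space: instead of $\mathbf Z=\cont{\mathcal N}{\cantor}$, which is only Polish, consider the subspace $\mathbf E\subseteq\cont{\mathcal N}{\cantor}$ consisting of those $z$ for which the map $i\mapsto(S\mapsto S^i(z(S)))$ is, in a suitable sense, uniformly continuous in $S$ — equivalently, build $\mathbf E$ directly as a compact set of ``consistent orbits''. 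The uniformly-controlled-powers hypothesis (\ref{truc}) is exactly what forces the natural candidate for $\mathbf E$ to be \emph{compact}: the set $\{(S^i(x))_{S\in\mathcal N}; x\in\cantor, i\geq 0\}$ and its limits form a closed, hence compact, $0$-dimensional subset of the compact space $\cantor^{\mathcal N}$, because the equicontinuity in (\ref{truc}) prevents the orbits from ``escaping to infinity'' in the index $i$ in a way that would destroy closedness.

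Concretely, I would let $\mathbf E$ be the closure in $\cont{\mathcal N}{\cantor}$ (or in $\cantor^{\mathcal N}$, identifying a continuous function with its graph restricted appropriately) of the set $\{z_{x}:=(x)_{S\in\mathcal N}; x\in\cantor\}\cup\{U_{\mathcal N}^i(z_x)\}$ where $U_{\mathcal N}(z)(S):=S(z(S))$; the point of (\ref{truc}) is that this closure is compact and that $U_{\mathcal N}$ restricts to a continuous self-map of it. Then $\mathbf E$ is a compact $0$-dimensional metrizable space, hence a continuous image of $\cantor$ — and in fact, by a standard argument, one can either work directly with $\mathbf E$ (a compact metric dynamical system of which each $S\in\mathcal N$ is a factor via the evaluation map $\pi_S$, which is onto because constant orbits $z_x$ are in $\mathbf E$) and then pull back to $\cantor$ using Corollary \ref{BalcSim}, or arrange $\mathbf E\cong\cantor$ directly by a further harmless enlargement. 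Either way each $S\in\mathcal N$ is a factor of $(\mathbf E,U_{\mathcal N})$, and hence of a Cantor system, completing the proof.

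\textbf{Main obstacle.} The delicate point is verifying that the natural ``orbit closure'' $\mathbf E$ is genuinely compact and that $U_{\mathcal N}$ maps it into itself continuously; this is precisely where the uniformly-controlled-powers hypothesis must be used in full strength, and one has to be careful that limits of orbit points are again orbit points (or controlled limits thereof) rather than spurious elements on which $U_{\mathcal N}$ behaves badly. A secondary technical nuisance is checking that passing from $X$ to $\cantor$ via Corollary \ref{cor:intermediate} preserves the uniformly-controlled-powers property; if that turns out to be awkward, an alternative is to carry out the orbit-closure construction over the original compact metric space $X$ to get a compact metric common extension $(\mathbf E,U_{\mathcal N})$ of $\mathcal N\subseteq\cont XX$ directly, and only then apply Corollary \ref{BalcSim} once at the very end to descend to $\cantor$.
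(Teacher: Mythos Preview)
Your ``alternative'' route at the end is exactly the paper's proof, and it is the right one to lead with. The paper works directly over $X$: it defines $\mathtt e_{i,a}\in\cont{\mathcal N}{X}$ by $\mathtt e_{i,a}(S)=S^i(a)$ (which is precisely your $U_{\mathcal N}^i(z_a)$), observes that ``uniformly controlled powers'' says exactly that the family $\{\mathtt e_{i,a};\; i\geq 0,\, a\in X\}$ is pointwise equicontinuous on $\mathcal N$, and invokes Ascoli's theorem to conclude that its uniform closure $\mathbf E\subseteq\cont{\mathcal N}{X}$ is compact. Invariance of $\mathbf E$ under $U_{\mathcal N}$ follows from $U_{\mathcal N}(\mathtt e_{i,a})=\mathtt e_{i+1,a}$, and each evaluation map $\pi_S$ is onto because the constant maps $\mathtt e_{0,a}$ lie in $\mathbf E$. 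This already gives a compact \emph{metric} common extension $(\mathbf E,U_{\mathcal N}|_{\mathbf E})$ of $\mathcal N$; one then applies Corollary \ref{BalcSim} once to pass to $\cantor$, and finishes with Lemma \ref{lem:okmap} as you say. So your sketch of $\mathbf E$ and of why it works is correct; you should name Ascoli explicitly as the reason for compactness.

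Your primary route---first lifting $\mathcal N$ to some $\mathcal N'\subseteq\cont{\cantor}{\cantor}$ via Corollary \ref{cor:intermediate} and then arguing that $\mathcal N'$ again has uniformly controlled powers---is not a mere ``secondary nuisance'' but a real gap, and the paper avoids it entirely. From the intertwining relation $\pi F(p)=G(p)\pi$ one gets $\pi F(p)^i=G(p)^i\pi$, hence control of $\pi\bigl(F(p_k)^i(x)\bigr)$ uniformly in $i$ and $x$; but since $\pi$ has nontrivial fibers this says nothing about $F(p_k)^i(x)$ itself, and nothing in the construction of Lemma \ref{lem:strongext} pins down the behaviour of $F(p)$ within fibers well enough to force equicontinuity of the powers. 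There is no reason to expect this step to go through, and no need for it: do the orbit-closure construction over $X$ first, then descend to $\cantor$ at the end.
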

\begin{proof} By Lemma \ref{lem:okmap}, it is enough to prove the result for each one of the compact families $\mathcal N$. Moreover, since every self-map of a compact metric space is a factor of some self-map of $\cantor$, it is enough to show that $\mathcal N$ admits a common extension $U$ defined on some compact metric space $\mathbf E$.

For any $i\in\mathbb Z_+$ and $a\in X$, denote by $\mathtt e_{i,a}:\mathcal N\to X$ the map defined by $\mathtt e_{i,a}(S):= S^i(a)$. Since $\mathcal N$ has uniformly controlled powers, the family of all such maps $\mathtt e_{i,a}$ is pointwise equicontinuous on $\mathcal N$; so its uniform closure $\mathbf E$ is a compact subset of $\mathbf Z=C(\mathcal N,X)$, by Ascoli's theorem. Note also that $\mathbf E$ contains all constant maps because $\{ \mathtt e_{i,a}\}$ already does (take $i=0$ and let $a$ vary).

Now, consider the map $U_{\mathcal N}:\mathbf Z\to \mathbf Z$ defined in the proof of Lemma \ref{lem:cantorfactor}, 
$$U_{\mathcal N}(z)(S)=S(z(S))\, .$$

The family $\{\mathtt e_{i,a}\}$ is invariant under $U_{\mathcal N}$, because $U_{\mathcal N}(\mathtt e_{i,a})=\mathtt e_{i+1,a}$. Hence, $\mathbf E$ is also invariant under $U_{\mathcal N}$. Moreover, since 
$\mathbf E$ contains all constant maps, each evaluation map $\pi_S:\mathbf E\to X$ is onto. So the map $U:=U_{\mathcal N}|_{\mathbf E}$ has the required property.
\end{proof}

\begin{cor}\label{cor:contract} The family $\mathcal F$ of all $d$-contractive self-maps of $X$ has a common extension defined on $\cantor$.
\end{cor}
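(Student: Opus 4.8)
The plan is to realise $\mathcal F$ as a countable union of compact families to which Example \ref{exjoke} applies, and then to quote Proposition \ref{propjoke}. Throughout I read ``$d$-contractive'' in the usual Banach sense: a map $S:X\to X$ is $d$-contractive if there is a constant $\lambda=\lambda(S)<1$ with $d(S(x),S(y))\le\lambda\, d(x,y)$ for all $x,y\in X$.

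First I would stratify $\mathcal F$ according to the contraction ratio: for each integer $n\ge 1$ set
\[\mathcal F_n:=\bigl\{\,S\in\cont{X}{X}\;:\; d(S(x),S(y))\le\bigl(1-\tfrac1n\bigr)\,d(x,y)\ \text{ for all }x,y\in X\,\bigr\},\]
so that $\mathcal F=\bigcup_{n\ge 1}\mathcal F_n$. The main point to verify is that each $\mathcal F_n$ is a \emph{compact} subset of $\cont{X}{X}$. It is closed, since the defining inequality survives passing to a uniform limit; and it is uniformly equicontinuous, since all its members share the Lipschitz constant $1-\tfrac1n$. As $X$ is compact, pointwise boundedness is automatic, so the Arzel\`a--Ascoli theorem yields compactness of $\mathcal F_n$.

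Next I would check that each compact family $\mathcal F_n$ has uniformly controlled powers, so that Example \ref{exjoke} may be invoked. Every $S\in\mathcal F_n$ is a genuine contraction of the complete metric space $X$, so by the Banach fixed point theorem it has a unique fixed point $\alpha(S)$ and $S^i(x)\to\alpha(S)$ for every $x\in X$; moreover $S$ is $1$-Lipschitz. Thus $\mathcal F_n$ satisfies the hypotheses of Example \ref{exjoke}, hence has uniformly controlled powers. (If one prefers not to use Example \ref{exjoke} as a black box here, the needed estimate is transparent: $S\mapsto\alpha(S)$ is continuous on $\mathcal F_n$, and $d(S^i(x),\alpha(S))\le(1-\tfrac1n)^i\,\diam(X)\to 0$ as $i\to\infty$, \emph{uniformly} in $x\in X$ and $S\in\mathcal F_n$, which is exactly the input used in the proof of Example \ref{exjoke}.)

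Finally, $\mathcal F$ is covered by the countably many compact sets $\mathcal F_n$, each with uniformly controlled powers, so Proposition \ref{propjoke} produces a common extension $U_{\mathcal F}$ of $\mathcal F$ defined on $\cantor$. I do not expect a serious obstacle; the one thing requiring a moment's care is that the stratification must be by the Lipschitz \emph{ratio} rather than merely by the condition $d(S(x),S(y))<d(x,y)$ for $x\neq y$, because a uniform Lipschitz bound on each piece is precisely what makes Arzel\`a--Ascoli apply and what lets Example \ref{exjoke} be used on $\mathcal F_n$ with the single rate $\varepsilon_i=(1-\tfrac1n)^i\,\diam(X)$. This is not a cosmetic point: the larger family of maps satisfying only $d(S(x),S(y))<d(x,y)$ for $x\ne y$ is in general not $\sigma$-compact in $\cont{X}{X}$ (already for $X=[0,1]$), so it could not be handled by a single such covering.
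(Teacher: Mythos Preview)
Your proof is correct and follows essentially the same route as the paper: the paper stratifies $\mathcal F$ as $\bigcup_{n\in\NN}\mathcal N_{1-1/n}$ (your $\mathcal F_n$), invokes Ascoli for compactness, Example~\ref{exjoke} for uniformly controlled powers, and then Proposition~\ref{propjoke}. Your write-up simply supplies more detail at each step.
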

\begin{proof} For each $c\in (0,1)$, let us denote by $\mathcal N_c$ the family of all $c$-Lipschitz self-maps of $X$. Then $\mathcal N_c$ is compact by Ascoli's theorem, and it has uniformly controlled powers by Example \ref{exjoke}. Since $\mathcal F=\bigcup_{n\in\NN} \mathcal N_{1-\frac1n}$, the resut follows.
\end{proof}

\begin{remark} In general, the family of all $1$-Lipschitz self-maps of $X$, or even the family of all isometries of $X$, does not have a common extension to $\cantor$. Consider for example the rotations of $X:=\TT$ (see the proof of Remark \ref{biensur}).
\end{remark}

We conclude this section with a kind of abstract nonsense characterizing the belonging to a subclass of the $\sigma$-ideal $\hbox{\textgoth I}$. 

\begin{fact} Let $\mathcal N$ be a compact subset of $\cont{X}{X}$, and let  $U_{\mathcal N}:\cont{\mathcal N}{X}\to \cont{\mathcal N}{X}$ be the (usual) map defined by $U_{\mathcal N}(z)(S)=S(z(S))$. The following are equivalent.
\begin{enumerate}
\item[\rm (i)] There is a map $\mathfrak U:\cantor\to\cantor$ and a continuous map $S\mapsto \mathfrak q_S$ from $\mathcal N$ into $\cont{\cantor}{X}$ such that $\mathfrak U$ is an extension of each $S\in\mathcal N$, with factoring map $\mathfrak q_S$.
\item[\rm (ii)] There is a compact set $\mathbf E\subseteq\cont{\mathcal N}{X}$ invariant under $U_{\mathcal N}$ such that, for each $S\in\mathcal N$, the evalution map $z\mapsto z(S)$ maps $\mathbf E$ onto $X$.
\end{enumerate}
\end{fact}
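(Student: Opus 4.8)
The statement is an equivalence between (i) a common extension defined on the Cantor space with \emph{continuously varying} factoring maps $S\mapsto\mathfrak q_S$, and (ii) the existence of a compact $U_{\mathcal N}$-invariant subset $\mathbf E$ of $\cont{\mathcal N}{X}$ on which all evaluation maps are onto. The direction (ii)$\Rightarrow$(i) is the ``easy'' one and essentially repeats the argument of Proposition \ref{propjoke}: given such an $\mathbf E$, the restriction $U:=U_{\mathcal N}|_{\mathbf E}$ is a self-map of a compact metric space, and the evaluation maps $\pi_S:\mathbf E\to X$, $\pi_S(z)=z(S)$, witness that each $S\in\mathcal N$ is a factor of $U$; moreover $S\mapsto\pi_S$ is continuous (as a map into $\cont{\mathbf E}{X}$) because $\mathbf E$ is an equicontinuous family. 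Then one pushes this down to $\cantor$: since every self-map of a compact metric space is a factor of a self-map of $\cantor$, pick $q:\cantor\to\mathbf E$ with the extension property (Theorem \ref{Weihr}) and a lift $\mathfrak U:\cantor\to\cantor$ of $U q$; set $\mathfrak q_S:=\pi_S\circ q$. Continuity of $S\mapsto\mathfrak q_S$ is inherited from continuity of $S\mapsto\pi_S$ by post-composition with the fixed map $q$.

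\textbf{The hard direction.} For (i)$\Rightarrow$(ii), I would define $\mathbf E$ directly from the data in (i). Given $\mathfrak U:\cantor\to\cantor$ and the continuous family $S\mapsto\mathfrak q_S\in\cont{\cantor}{X}$, for each point $p\in\cantor$ let $z_p\in\cont{\mathcal N}{X}$ be the map $z_p(S):=\mathfrak q_S(p)$. The assignment $p\mapsto z_p$ is continuous from $\cantor$ into $\cont{\mathcal N}{X}$ (this uses that $\{\mathfrak q_S\}_{S\in\mathcal N}$ is uniformly equicontinuous as the continuous image of the compact set $\mathcal N$, which is exactly what makes $(S,p)\mapsto\mathfrak q_S(p)$ jointly continuous). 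Let $\mathbf E:=\{z_p;\ p\in\cantor\}$, the continuous image of $\cantor$, hence compact in $\cont{\mathcal N}{X}$. Each evaluation $z\mapsto z(S)$ maps $\mathbf E$ onto $X$ because $z_p(S)=\mathfrak q_S(p)$ and $\mathfrak q_S$ is onto (being a factoring map). Finally, $U_{\mathcal N}$-invariance: the factoring relation $\mathfrak q_S\circ\mathfrak U=S\circ\mathfrak q_S$ gives, for each $S$, $\mathfrak q_S(\mathfrak U(p))=S(\mathfrak q_S(p))=S(z_p(S))=U_{\mathcal N}(z_p)(S)$; that is, $z_{\mathfrak U(p)}=U_{\mathcal N}(z_p)$, so $U_{\mathcal N}(\mathbf E)=\mathbf E$ (in fact $U_{\mathcal N}(\mathbf E)\subseteq\mathbf E$, which is what ``invariant'' means here).

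\textbf{The main obstacle.} The one genuinely delicate point is the joint continuity of $(S,p)\mapsto\mathfrak q_S(p)$, equivalently the continuity of $p\mapsto z_p$ into the sup-metric space $\cont{\mathcal N}{X}$. Separate continuity is immediate, but one needs the hypothesis in (i) that $S\mapsto\mathfrak q_S$ is continuous into $\cont{\cantor}{X}$ \emph{with the uniform topology}: then $\{\mathfrak q_S;\ S\in\mathcal N\}$ is a continuous image of a compact metric space inside $\cont{\cantor}{X}$, hence compact, hence (by Ascoli) uniformly equicontinuous, and uniform equicontinuity upgrades separate to joint continuity. This is precisely the role played by the word ``continuous'' in the statement of (i), and it is the hinge of the whole argument; everything else is bookkeeping with the definition of a factor and the definition of $U_{\mathcal N}$. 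I would also remark in passing that this Fact explains why the families $\mathcal N$ appearing in Proposition \ref{propjoke} work: there $\mathbf E$ was built as the closure of the equicontinuous family $\{\mathtt e_{i,a}\}$, which is exactly an instance of condition (ii).
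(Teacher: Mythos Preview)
Your proposal is correct and follows essentially the same route as the paper: for (i)$\Rightarrow$(ii) the paper defines exactly your $z_p$ (written there as $\delta_p\circ\mathfrak q$) and sets $\mathbf E=\{z_p;\ p\in\cantor\}$, verifying invariance via the same computation $U_{\mathcal N}(z_p)=z_{\mathfrak U(p)}$; for (ii)$\Rightarrow$(i) the paper simply points to the proof of Proposition~\ref{propjoke} together with the fact that every compact metrizable system extends to $\cantor$. You have supplied more detail than the paper does---in particular the explicit check that $S\mapsto\pi_S$ is continuous (via equicontinuity of the compact set $\mathbf E$) and the justification of joint continuity of $(S,p)\mapsto\mathfrak q_S(p)$---but the underlying argument is the same.
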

\begin{proof} Since every compact metrizable dynamical system has an extension to $\cantor$, the implication (ii)$\implies$(i) follows easily from the proof of Proposition \ref{propjoke}.

\smallskip
Conversely, assume that (i) holds true, and denote by $\mathfrak q : \mathcal N\to\cont{\cantor}{X}$ the map $S\mapsto \mathfrak q_S$. For any $p\in\cantor$, denote by $\delta_p:\cont{\cantor}{X}\to X$ the evaluation map at $p$. Then, the map $p\mapsto \delta_p\circ \mathfrak q$ is continuous from $\cantor$ into $\cont{\mathcal N}{X}$. So the set $\mathbf E:=\{ \delta_p\circ \mathfrak q;\; p\in\cantor\}$ is a compact subset of $\cont{\mathcal N}{X}$. Moreover, for any $z=\delta_p\circ \mathfrak q\in\mathbf E$ and all $S\in\mathcal N$ we have
$$U_{\mathcal  N}(z)\,(S)=S(z(S))=S(\mathfrak q_S(p))=\mathfrak q_S(\mathfrak U(p))=\delta_{\mathfrak U(p)}\circ\mathfrak q\, (S);$$
and this show that $\mathbf E$ is invariant under the map $U_{\mathcal N}$. Finally, the evaluation map at any $S\in\mathcal N$ maps $\mathbf E$ onto $X$ because $\mathfrak q_S$ is onto.
\end{proof}

\section{Common generalized extensions}\label{generalized} In this section, we show that something similar to Theorem \ref{Ksigma1} does hold true with $\cantor$ in place of $\NN^\NN$, but with a weaker notion of ``factor".

\smallskip
For any compact metric space $X$, let $\hyp{X}$ be the space of all nonempty compact subsets of $X$ endowed with the Haudorff metric. If $T:X\to X$ is a map, we denote again by $T:\hyp{X}\to\hyp{X}$ the map induced by $T$ on $\hyp{X}$; that is, $T(M)=\{ T(x);\; x\in M\}$ for every $M\in\hyp{X}$.  Obviously, $(X,T)$ is equivalent to a subsystem of $(\hyp {X}, T)$ thanks to the embedding $X\ni x\mapsto \{ x\}\in\hyp{X}$. However, in general $(X,T)$ is not a \emph{factor} of $(\hyp {X}, {T})$. For example, this cannot be the case if $T$ is onto (so that $X$ is a  fixed point of ${T}$ in $\hyp{X}$) and $T$ has no fixed point in $X$. %Hence, in such cases $f$ is not a factor of $\tilde{f}$. 

\smallskip
We shall say that $(X,T)$ is a a \emph{generalized factor} of a dynamical system $(E,S)$ if  there exists an upper semi-continuous mapping $\pi:E \rightarrow \hyp{X}$ such that
the following conditions hold:
\begin{itemize}
	\item $\{x\}\in \pi(E)$ for every $x\in X$;
	\item $\pi(S(u))={T}(\pi(u))$ for all $u\in E$.
\end{itemize}

(Recall that $\pi:E\to\hyp{X}$ is \emph{upper semicontinuous} if the relation $x\in \pi(u)$ is closed in $E\times X$; equivalently, if for any open set $O\subseteq X$, the set $\{ u\in E\;:\; \pi(u)\subseteq O\}$ is open in $E$.)

\smallskip
It is obvious from the definition that ``true" factors are generalized factors. Also, we have the expected transitivity property:
\begin{remark} If $(X,T)$ is a generalized factor of $(Y,S)$ and $(Y,S)$ is a generalized factor of $(Z,R)$, then $(X,T)$ is a generalized factor of $(Z,R)$.
\end{remark}
\begin{proof} Let $\pi: Y \rightarrow \hyp{X}$ witness that $T$ is a generalized factor of $S$ and let $\pi':Z \rightarrow \hyp{Y}$ witness the fact that $S$ is a generalized factor of $R$. Define $\pi^* : \hyp{Y} \rightarrow  \hyp{X}$ by
$\pi^*(M) := \bigcup \{\pi(y): y \in M\}$. Then, $\pi^*$ is a well-defined upper semicontinuous mapping. Now, the mapping $\gamma: Z \rightarrow \hyp{X}$ defined by $\gamma := \pi^*\circ \pi'$ is is easily checked to be again upper semicontinuous, and shows that $T$ is a generalized factor of $R$. 
\end{proof}

\begin{thm}\label{Ksigma2} Let $X$ be a compact metric space. Given any $\sigma$-compact family $\mathcal F\subseteq \mathcal C(X,X)$, one can find a map $U_{\mathcal F}:\cantor\to\cantor$ such that every $T\in\mathcal F$ is a generalized factor of $U_{\mathcal F}$.
\end{thm}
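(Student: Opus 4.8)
The plan is to mimic the proof of Theorem \ref{Ksigma1}, replacing the role of the Cantor space $\cantor$ as a ``resolution'' of $X$ by the hyperspace $\hyp{X}$, and replacing ``true'' factors by generalized factors at the points where Baire-space nowhere-local-compactness was forced. First I would reduce, exactly as before, to the case where $\mathcal F$ is a single compact set $\mathcal M\subseteq\cont{X}{X}$: since generalized factors compose (by the Remark just proved) and since Lemma \ref{lem:okmap} produces, for countably many maps $U_n:\cantor\to\cantor$, a single $U:\cantor\to\cantor$ of which each $U_n$ is a true factor (hence a generalized factor), it suffices to handle one compact $\mathcal M$ and then amalgamate.

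Next, for a fixed compact $\mathcal M\subseteq\cont{X}{X}$, I would use the strong extension property (Lemma \ref{lem:strongext}) to pull $\mathcal M$ back to a compact family $\mathcal N\subseteq\cont{\cantor}{\cantor}$ in the manner of Corollary \ref{cor:intermediate}: pick $\pi:\cantor\to X$ with the strong extension property, write $\mathcal M=G(\cantor)$, form $\Phi(p)=G(p)\pi$, and get $F:\cantor\to\cont{\cantor}{\cantor}$ with $\pi F(p)=G(p)\pi$, so $\mathcal N:=F(\cantor)$ is compact and each $T\in\mathcal M$ is a true factor of some $S\in\mathcal N$. Enlarging $\mathcal N$ if necessary, assume $\mathcal N$ is homeomorphic to $\cantor$. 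Now I would form the ``self-mapping of the function space'' $\mathbf Z:=\cont{\mathcal N}{\cantor}$ with $U_{\mathcal N}(z)(S)=S(z(S))$ as in Lemma \ref{lem:cantorfactor}; the evaluation maps $\pi_S:\mathbf Z\to\cantor$ exhibit each $S\in\mathcal N$ as a true factor of $U_{\mathcal N}$. The only defect is that $\mathbf Z\cong\baire$, not $\cantor$ — and this is precisely where the relaxation to generalized factors should buy back compactness.

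The key new idea I would exploit: since $\cantor$ is compact, so is $\mathbf Z=\cont{\mathcal N}{\cantor}$'s \emph{hyperspace-valued} analogue if we instead take limits. Concretely I would consider, for each compact $\mathcal N$, the set $\mathbf E$ obtained by taking a countable dense subset of $\mathbf Z$, closing it under $U_{\mathcal N}$ to get an invariant countable set $Z_0$, and then taking the closure of $\{\,\overline{\{z(S):z\in K\}}\,\}$-type data — more cleanly, I would apply the hyperspace construction: the map $\mathbf Z\ni z\mapsto (S\mapsto z(S))$ lands in $\cont{\mathcal N}{\cantor}$, but passing to $\cont{\mathcal N}{\hyp{\cantor}}$ with the compact-open topology and taking a compact invariant subset $\mathbf E$ containing all ``point'' maps $S\mapsto\{c\}$ gives a compact dynamical system on which the evaluation-at-$S$ maps into $\hyp{\cantor}$ are onto the singletons. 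Since $\cantor$ is zero-dimensional compact metrizable, $\hyp{\cantor}$ is still compact metrizable, and any compact metrizable dynamical system is a true factor of a self-map of $\cantor$ (Corollary \ref{BalcSim}); the upper semicontinuous assignment $S\mapsto\pi_S$ is exactly the data of a generalized factor. Composing the generalized factorizations $\cantor\to\hyp{\cantor}$-system, $\hyp{\cantor}$-system$\to$the $\mathcal N$-system, $\mathcal N$-members $\to$ $\mathcal M$-members, and invoking transitivity, yields $U_{\mathcal M}:\cantor\to\cantor$ with every $T\in\mathcal M$ a generalized factor of it; amalgamating over $n$ finishes the proof.

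The main obstacle, I expect, is setting up the compact invariant ``hyperspace function space'' $\mathbf E$ correctly so that (a) it is genuinely compact — this needs an Ascoli/equicontinuity argument or, more robustly, a direct construction as the closure of the $U_{\mathcal N}$-orbit of the (already compact, even homeomorphic to $\cantor$) set of singleton-valued maps, using that $\hyp{\cantor}$ is compact so \emph{any} closed subset of $\cont{\mathcal N}{\hyp{\cantor}}$ that is equicontinuous is compact, and checking equicontinuity is automatic here because the target $\hyp{\cantor}$ has a clopen basis refining any given cover, so the maps $S\mapsto S(z(S))$ inherit uniform moduli from the finitely many ``levels'' of $\cantor$; and (b) the resulting $\pi:E\to\hyp{X}$ — obtained by transporting along $\pi:\cantor\to X$ and the various factor maps — is upper semicontinuous and hits every singleton $\{x\}$, which follows because each intermediate factoring map is continuous onto and the singleton-preservation is built in at the bottom level. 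Once compactness and upper semicontinuity of the composed assignment are nailed down, the rest is the bookkeeping of transitivity already isolated in the Remark and Lemma \ref{lem:okmap}.
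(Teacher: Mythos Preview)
Your overall architecture (reduce to a compact family, pass to a compact $\mathcal N\subseteq\cont{\cantor}{\cantor}$ via Corollary~\ref{cor:intermediate}, build a single self-map for $\mathcal N$, then amalgamate by Lemma~\ref{lem:okmap}) matches the paper. The gap is in the middle step, where you try to find a compact $U_{\mathcal N}$-invariant set $\mathbf E\subseteq\cont{\mathcal N}{\hyp{\cantor}}$ containing all constant singleton maps.

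The problem is that passing to the hyperspace does nothing at this point. The orbit under $U_{\mathcal N}$ of a constant singleton map $S\mapsto\{c\}$ consists of the maps $S\mapsto\{S^i(c)\}$, which are still singleton-valued; and since $x\mapsto\{x\}$ is a closed embedding of $\cantor$ into $\hyp{\cantor}$, the uniform closure of this orbit remains inside the copy of $\cont{\mathcal N}{\cantor}$. So your $\mathbf E$, if compact, would be exactly a compact $U_{\mathcal N}$-invariant subset of $\cont{\mathcal N}{\cantor}$ containing all constants --- precisely condition (ii) of the Fact at the end of Section~\ref{nonsense}. That would yield a \emph{true} common extension of $\mathcal N$ to $\cantor$, hence (via Corollary~\ref{cor:intermediate} and Lemma~\ref{lem:okmap}) a true common extension to $\cantor$ for every $\sigma$-compact family, contradicting Remark~\ref{biensur}. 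Concretely, the family $\{S\mapsto S^i(c)\}_{i,c}$ is exactly $\{\mathtt e_{i,c}\}$ from Section~\ref{nonsense}, whose equicontinuity is the ``uniformly controlled powers'' hypothesis of Proposition~\ref{propjoke} --- and that hypothesis genuinely fails in general. Your claimed automatic equicontinuity (``the target $\hyp{\cantor}$ has a clopen basis\dots'') is therefore incorrect.

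What the paper does instead is replace \emph{continuous} maps $\mathcal N\to\hyp{\cantor}$ by \emph{upper semicontinuous} ones, realized concretely as the space $\mathbf Z$ of all compact subsets $A\subseteq\mathcal N\times\cantor$ whose first projection is all of $\mathcal N$. This $\mathbf Z$ is a closed subset of the compact space $\hyp{\mathcal N\times\cantor}$, hence compact for free (and in fact homeomorphic to $\cantor$). The map $U_{\mathcal N}$ is then defined fiberwise by $U_{\mathcal N}(A)_S=S(A_S)$, and the vertical-section map $A\mapsto A_S$ is the upper semicontinuous $\pi_S:\mathbf Z\to\hyp{\cantor}$ witnessing that $S$ is a generalized factor. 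The moral: the relaxation from ``factor'' to ``generalized factor'' must be built into the \emph{domain} of the construction (usc multimaps, i.e.\ closed relations), not merely into the target; your attempt keeps the domain as continuous functions and so never actually uses the relaxation.
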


The proof will use Corollary \ref{cor:intermediate}, Lemma \ref{lem:okmap} and the following variant of Lemma \ref{lem:cantorfactor}.

\begin{lem}\label{cantorgenfactor} Let ${\mathcal N} $ be a compact subset of $ \cont{\cantor}{\cantor}$. Then, there exists map $U_{\mathcal N}: \cantor \rightarrow \cantor$ such that every $S\in\mathcal N$ is generalized factor of $U_{\mathcal N}$.
\end{lem}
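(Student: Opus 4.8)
The plan is to imitate the proof of Lemma \ref{lem:cantorfactor}, replacing the space $\cont{\mathcal N}{\cantor}$ (which is homeomorphic to $\baire$, hence non-compact) by a suitable \emph{compact} space of the same flavour on which the analogous ``diagonal'' map still makes sense. The natural candidate is $\mathbf Z:=\cont{\mathcal N}{\hyp{\cantor}}$, the space of continuous maps from $\mathcal N$ into the hyperspace $\hyp{\cantor}$ of nonempty compact subsets of $\cantor$, endowed with the topology of uniform convergence. Since $\hyp{\cantor}$ is a compact metrizable $0$-dimensional space (it is homeomorphic to $\cantor$), and $\mathcal N$ is compact metrizable, one checks that $\mathbf Z$ is compact metrizable and $0$-dimensional, hence a continuous image of $\cantor$; as in the other proofs, it is then enough to realize each $S\in\mathcal N$ as a generalized factor of a self-map $U_{\mathcal N}$ of $\mathbf Z$ and then pull everything back to $\cantor$.

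First I would define $U_{\mathcal N}:\mathbf Z\to\mathbf Z$ by the evident formula $U_{\mathcal N}(z)(S):=S(z(S))$, where on the right-hand side $S$ denotes the induced map $S:\hyp{\cantor}\to\hyp{\cantor}$, i.e. $S(z(S))=\{S(x):x\in z(S)\}$. One must check that $U_{\mathcal N}$ is well defined (if $z\in\mathbf Z$ then $S\mapsto S(z(S))$ is continuous from $\mathcal N$ to $\hyp{\cantor}$, using that $(S,M)\mapsto S(M)$ is continuous on $\mathcal N\times\hyp{\cantor}$) and that $U_{\mathcal N}$ itself is continuous, which is a routine uniform-continuity computation exactly as in Lemma \ref{lem:cantorfactor}. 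Next, for each fixed $S_0\in\mathcal N$ I would let $\pi_{S_0}:\mathbf Z\to\hyp{\cantor}$ be the evaluation map $\pi_{S_0}(z):=z(S_0)$; this is continuous, hence upper semicontinuous as a map into $\hyp{\cantor}$, and it satisfies $\pi_{S_0}(U_{\mathcal N}(z))=U_{\mathcal N}(z)(S_0)=S_0(z(S_0))=S_0(\pi_{S_0}(z))$, which is precisely the intertwining condition $\pi_{S_0}\circ U_{\mathcal N}=S_0\circ\pi_{S_0}$ required of a generalized factor (here $S_0$ on $\hyp\cantor$ is the induced map). Finally, $\mathbf Z$ contains all constant maps $z\equiv\{p\}$ with $p\in\cantor$, and $\pi_{S_0}$ sends such a $z$ to the singleton $\{p\}$; so $\{p\}\in\pi_{S_0}(\mathbf Z)$ for every $p\in\cantor$, which is the ``singletons are hit'' condition in the definition of a generalized factor. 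Thus $U_{\mathcal N}$ on $\mathbf Z$ (equivalently, a self-map of $\cantor$ obtained by lifting through a continuous surjection $\cantor\twoheadrightarrow\mathbf Z$ in the usual way) has the required property.

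One subtlety is that in lifting from $\mathbf Z$ to $\cantor$ I should make sure a \emph{generalized} factor of the $\mathbf Z$-system is still a generalized factor of the $\cantor$-system: if $q:\cantor\twoheadrightarrow\mathbf Z$ is a surjection with $U_{\mathcal N}q=qV$ for some $V:\cantor\to\cantor$ (such $V$ exists since $\cantor$ has the extension property, Theorem \ref{Weihr} / Corollary \ref{BalcSim} applied to $\phi=U_{\mathcal N}q$), then for $S_0\in\mathcal N$ the composition $\pi_{S_0}\circ q:\cantor\to\hyp\cantor$ is upper semicontinuous, surjective onto the singletons, and intertwines $V$ with the induced map $S_0$ on $\hyp\cantor$; composing with the $S_0$-coordinate projection $\cont\cantor\cantor$-valued data is not needed here — we directly get that $S_0$ is a generalized factor of $V$. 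I would then invoke Corollary \ref{cor:intermediate} and Lemma \ref{lem:okmap} exactly as in the proof of Theorem \ref{Ksigma1}: given $\sigma$-compact $\mathcal F=\bigcup_n\mathcal M_n\subseteq\cont XX$, replace each $\mathcal M_n$ by a compact $\mathcal N_n\subseteq\cont\cantor\cantor$ dominating it (Corollary \ref{cor:intermediate}), build $U_n:\cantor\to\cantor$ for which each $S\in\mathcal N_n$ is a generalized factor (Lemma \ref{cantorgenfactor}), amalgamate the $U_n$ into a single $U:\cantor\to\cantor$ via Lemma \ref{lem:okmap}, and use transitivity of the generalized-factor relation (the Remark preceding Theorem \ref{Ksigma2}) together with the trivial fact that an ordinary factor is a generalized factor.

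The main obstacle, I expect, is purely bookkeeping rather than conceptual: one has to be careful that every map in sight lands in $\hyp{\cantor}$ rather than $\cantor$, that the induced maps $S:\hyp\cantor\to\hyp\cantor$ behave continuously in $S$ as well as in their argument, and — most importantly — that the composition of generalized factors is handled correctly when passing through the surjection $q:\cantor\twoheadrightarrow\mathbf Z$, since upper semicontinuous set-valued maps do not compose quite as naively as continuous single-valued ones. (This is exactly the point already addressed by the Remark on transitivity, which is why I would lean on it rather than redo the argument.) Apart from that, all continuity verifications are of the same uniform-convergence type already carried out in Lemma \ref{lem:cantorfactor}, so I would simply say ``as in the proof of Lemma \ref{lem:cantorfactor}'' at those points.
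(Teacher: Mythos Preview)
Your central claim, that $\mathbf Z=\cont{\mathcal N}{\hyp{\cantor}}$ is compact, is false, and this breaks the argument. Since $\hyp{\cantor}$ is homeomorphic to $\cantor$, your $\mathbf Z$ is homeomorphic to $\cont{\mathcal N}{\cantor}$; when $\mathcal N$ is (or is enlarged to) a copy of $\cantor$, this is exactly the space that the proof of Lemma~\ref{lem:cantorfactor} identifies with $\baire$. Replacing the target $\cantor$ by $\hyp{\cantor}$ changes nothing: Ascoli's theorem still requires equicontinuity, and the full function space is not equicontinuous. Consequently you cannot invoke Theorem~\ref{Weihr} to get a surjection $q:\cantor\twoheadrightarrow\mathbf Z$, and the whole lifting step collapses.

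The paper repairs precisely this point by relaxing ``continuous'' to ``upper semicontinuous'' at the level of the space $\mathbf Z$ itself, not just at the factoring maps. Concretely, it takes $\mathbf Z$ to be the set of all compact $A\subseteq\mathcal N\times\cantor$ whose projection onto $\mathcal N$ is all of $\mathcal N$; equivalently, the set of upper semicontinuous maps $S\mapsto A_S$ from $\mathcal N$ to $\hyp{\cantor}$. This $\mathbf Z$ is a \emph{closed} subset of $\hyp{\mathcal N\times\cantor}$, hence compact, and it is $0$-dimensional without isolated points, so $\mathbf Z\cong\cantor$ directly (no lifting needed). The map $U_{\mathcal N}$ is then given by your formula $(U_{\mathcal N}(A))_S=S(A_S)$, and the evaluation $\pi_S(A)=A_S$ is upper semicontinuous---which is exactly what the definition of a generalized factor demands, so nothing is lost by dropping continuity. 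Your intuition that one should ``thicken'' the target to $\hyp{\cantor}$ is correct; the missing idea is that one must also thicken the \emph{domain} of the function space from continuous to upper semicontinuous maps to recover compactness.
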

\begin{proof} We assume of course that $\mathcal N\neq\emptyset$. Let us denote by $\mathbf Z$ be the set of all compact subsets of ${\mathcal N} \times \cantor$ whose projection on the first coordinate is all of ${\mathcal N}$. This is a closed subset of $\hyp{\mathcal N\times\cantor}$. Moreover, $\mathbf Z$ is $0$-dimensional because ${\mathcal N} \times {\cantor}$ is, and  it is easily seen to have no isolated points. Hence, $\mathbf Z$ is homeomorphic to $\cantor$. 

We define $U_{\mathcal N}: \mathbf Z \rightarrow \mathbf Z$ in the following manner. Let $A \in \mathbf Z$ and for each $ S\in {\mathcal N}$, let $A_S$ be the vertical cross section of $A$ at $S$, i.e.,
	 $A_S =\{ x\in\cantor\;:\; (S,x) \in A\}$. We define the set $U_{\mathcal N}(A)\subseteq \mathcal N\times\cantor$ by its vertical cross sections: $U_{\mathcal N}(A)_S :=S(A_S)$ for every $S\in\mathcal N$. In other words,
	 $$ U_{\mathcal N}(A)=\bigcup_{S\in\mathcal N} \{ S\}\times S(A_S)=\{ (S, S(x));\; (S,x)\in A\}\, .$$
	 
	 Let us check that $U_{\mathcal N}(A)\in\mathbf Z$. That the projection of $\mathcal U_{\mathcal N}(A)$ on the first coordinate is all of $\mathcal N$ is obvious because $A_S\neq\emptyset$ for every $S\in\mathcal N$. Let $\bigl\{ (S_n,u_n)\bigr\}$ be a sequence in $U_{\mathcal N}(A)$ converging to some point $(S,u)\in\mathcal N\times\cantor$. Then $u_n=S_n(x_n)$ for some $x_n\in\cantor$ such that 
	 $(S_n,x_n)\in A$. By compactness, we may assume that $x_n$ converges to some $x\in\cantor$. Then $(S,x)\in A$ because $A$ is closed in $\mathcal N\times\cantor$; and $u=S(x)$ because 
	 $u_n\to u$ and $S_n\to S$ uniformly. So $u\in S(A_S)$, \mbox{\it i.e.} $(S,u)\in U_{\mathcal N}(A)$. Thus, we see that $U_{\mathcal N}(A)$ is a closed and hence compact subset of $\mathcal N\times\cantor$.
	 
	 The same kind of reasoning shows that $U_{\mathcal N}:\mathbf Z\to\mathbf Z$ is continuous. One has to check that if $A_n\to A$ in $\hyp{\mathcal N\times\cantor}$ then: (i) for every $(S,u)\in U_{\mathcal N}(A)$, one can find a sequence $\bigl\{ (S_n,u_n)\bigr\}$ with $(S_n,u_n)\in U_{\mathcal N}(A_n)$ such that $(S_n,u_n)\to (S,u)$; and (ii) for every sequence $\bigl\{ (S_{n_k},u_{n_k})\bigr\}$ with $(S_{n_k},u_{n_k})\in U_{\mathcal N}(A_{n_k})$ converging to some $(S,u)\in \mathcal N\times\cantor$, we have $(S,u)\in U_{\mathcal N}(A)$. Both points follow easily from the definition of $U_{\mathcal N}$ and the fact that $A_n\to A$.

	Now, fix $S\in\mathcal N$ and consider $\pi_S:\mathbf Z\to \hyp{\cantor}$ defined by $\pi_S(A):=A_S$. This is an upper semicontinuous mapping; and $\pi_S U_{\mathcal N}= S\pi_S$ by the very definition of $U_{\mathcal N}$. Moreover, $\{ x\}=\pi_S(\mathcal N\times\{ x\})$ for every $x\in\cantor$. This shows that $S$ is a generalized factor of $U_{\mathcal N}$.
\end{proof}

\begin{proof}[Proof of Theorem \ref{Ksigma2}] One can now proceed exactly as in the proof of Theorem \ref{Ksigma1} (using Lemma \ref{cantorgenfactor} instead of Lemma \ref{lem:cantorfactor}), keeping in mind that factors are generalized factors and that generalized factors of generalized factors are again generalized factors.
\end{proof}

\end{document}